\documentclass[11pt,reqno]{amsart}

\usepackage{amscd}
\usepackage{amsfonts}
\usepackage{amsmath}
\usepackage{amssymb}
\usepackage{amsthm}
\usepackage{fancyhdr}
\usepackage{latexsym}
\usepackage[colorlinks=true, pdfstartview=FitV, linkcolor=blue,
            citecolor=blue, urlcolor=blue]{hyperref}

            \synctex=1
            
          %  \usepackage{ulem}
% use the "sout" tag to "strike through" text

\usepackage{cancel}
%use \cancel{c}  OR  \cancelto{0}{c}

\input epsf
\input texdraw
\input txdtools.tex
\input xy
\xyoption{all}

%%%%%%%%%%%%%%%%%%%%%%

\usepackage{caption}
\usepackage{subcaption}

\usepackage{cases}
\usepackage{cleveref}

\usepackage{xurl}

\usepackage{enumerate}

\usepackage[ruled,vlined]{algorithm2e}

\SetAlCapSkip{1em}

\usepackage{mathrsfs}

\usepackage{mathtools}

\usepackage{microtype}

%%%%%%%%%%%%%%%%%%%%%%
\usepackage{color}

\definecolor{Red}{rgb}{1.00, 0.00, 0.00}
\definecolor{DarkGreen}{rgb}{0.00, 1.00, 0.00}
\definecolor{Blue}{rgb}{0.00, 0.00, 1.00}
\definecolor{Cyan}{rgb}{0.00, 1.00, 1.00}
\definecolor{Magenta}{rgb}{1.00, 0.00, 1.00}
\definecolor{DeepSkyBlue}{rgb}{0.00, 0.75, 1.00}
\definecolor{DarkGreen}{rgb}{0.00, 0.39, 0.00}
\definecolor{SpringGreen}{rgb}{0.00, 1.00, 0.50}
\definecolor{DarkOrange}{rgb}{1.00, 0.55, 0.00}
\definecolor{OrangeRed}{rgb}{1.00, 0.27, 0.00}
\definecolor{DeepPink}{rgb}{1.00, 0.08, 0.57}
\definecolor{DarkViolet}{rgb}{0.58, 0.00, 0.82}
\definecolor{SaddleBrown}{rgb}{0.54, 0.27, 0.07}
\definecolor{Black}{rgb}{0.00, 0.00, 0.00}
\definecolor{dark-magenta}{rgb}{.5,0,.5}
\definecolor{myblack}{rgb}{0,0,0}
\definecolor{darkgray}{gray}{0.5}
\definecolor{lightgray}{gray}{0.75}
%%%%%%%%%%%%%%%%%%%%%%

%%%%%%%%%%%%%%%%%%%%%%%%%%%%
%  For Importing Pictures  %
%%%%%%%%%%%%%%%%%%%%%%%%%%%%

\usepackage[pdftex]{graphicx}
\usepackage{epstopdf}

%% Page Setup %%

\setlength{\textheight}{22.1truecm}
\setlength{\textwidth}{15.7truecm}
\marginparwidth  0truecm
\oddsidemargin   01truecm
\evensidemargin  01truecm
\marginparsep    0truecm
\topmargin -0.2in

%% New Commands %%

\newcommand{\p}{\partial}
\newcommand{\zz}{\mathbb{Z}}

\newcommand{\nn}{\mathbb{N}}
\newcommand{\qq}{\mathbb{Q}}

%\def\refer #1\par{\noindent\hangindent=\parindent\hangafter=1 #1\par}

%%%%%
%no indents
%\setlength{\parindent}{0pt}

%% Equation Numbers %%

%% New Environments %%

%\swapnumbers
\theoremstyle{plain}  % default
\newtheorem{theorem}{Theorem}
\newtheorem{proposition}{Proposition}
\newtheorem{lemma}{Lemma}

\theoremstyle{definition}
\newtheorem{definition}{Definition}

\theoremstyle{definition}
\newtheorem{example}[theorem]{Example}

%%%%%%%%%%5%%
\usepackage{tikz}

\usetikzlibrary{patterns}
%%%%%%%%%%%%%%%%%%%%%%%%%%%%%

\begin{document}

\title{
  Computing the Bernstein Polynomial and 
        the Krull-type Dimension of finitely generated $\boldsymbol{D}$-modules
}
  
\author{Harry Prieto}

\keywords{
Bernstein polynomial, 
Weyl algebra,
Gröbner bases,
$D$-modules,
numerical polynomials,
Krull-type dimension.
}

\begin{abstract}
We establish the existence of the Bernstein polynomial in one indeterminate $t$, and provide a method for its explicit computation. The Bernstein polynomial is associated with finitely generated modules over the Weyl algebra, known as $D$-modules, and is notoriously difficult to compute directly. Our approach is constructive, offering a systematic method to compute the Bernstein polynomial and its associated invariants explicitly. We begin by introducing the Weyl algebra as a ring of operators and stating some of its main properties, followed by considering the class of numerical polynomials. We  then develop a generalization of the theory of Gröbner bases specifically for $D$-modules and use it to compute the Bernstein polynomial and its invariants. As an application of the properties of the Bernstein polynomial, we develop the concept of the Krull-type dimension for $D$-modules, which sheds light on the structure of these modules.
\end{abstract}

\date{Nov 7, 2024}

\maketitle

\markboth{Bernstein Polynomials}
{H. Prieto}

\section{Introduction}
\label{sec:1}
 \setcounter{equation}{0}

The Bernstein polynomial is a numerical polynomial, analogous to the Hilbert polynomial in commutative algebra, that accounts for the dimension of a finitely generated module over the Weyl algebra. The principal aim of this paper is to provide a constructive approach to the existence of the Bernstein polynomial, developing a methodology to compute it and its invariants, as will be seen in the results of Section \ref{sec:4}. The existence of the Bernstein polynomial is usually established nonconstructively and, as noted in \cite[p.~95]{cou_95}, its explicit computation is notoriously complicated. The approach we take here relies on a generalization of the classical theory of Gr\"{o}bner bases and on results about numerical polynomials \cite{kondra_99} to obtain a relatively straightforward methodology for the computation of the Bernstein polynomial. Similar approaches have been successfully implemented in other contexts, for example, by D\"{o}nch and Levin \cite{levin_13} for bivariate dimension polynomials, and by Levin \cite{levin_07} for multivariable dimension polynomials with respect to several orderings, multivariate Berstein-type polynomials \cite{levin_24.1}, and bivariate difference dimension polynomials \cite{levin_24.2}.

In Section \ref{sec:2}, we begin by introducing the Weyl algebra as a ring of operators and stating some of its basic properties. Section \ref{sec:3} introduces the class of numerical polynomials $f(t)$, which become integer-valued for sufficiently large values of the indeterminate $t$. We also state two important facts about numerical polynomials that will assist in computing the Bernstein polynomial. Section \ref{sec:4} is devoted to developing a generalization of the classical theory of  Gr\"{o}bner bases for the case of finitely generated modules over the Weyl algebra $A_n$. There, we will first introduce a way of ordering the elements of the Weyl algebra, followed by a concept of ``divisibility." These notions are then extended to the case of a finitely generated free module over $A_n$. Next we define a concept of reduction and present an analogue of the division algorithm for the Weyl algebra. We then define the concept of a Gr\"{o}bner basis for $D$-modules, state some of its properties and provide a criterion to find one (Theorem \ref{buch}, Buchbereger's algorithm). Section \ref{sec:5} contains the main results of the paper, namely, establishing the existence of and computing the Bernstein polynomial of a finitely generated $D$-module (Theorem \ref{bern_basis} and Theorem \ref{bern_exist}) using both the theory of  Gr\"{o}bner bases and the results about numerical polynomials from previous sections. We also use these ideas to exhibit the invariants of the Bernstein polynomial (Theorem \ref{invar}), namely, its degree and multiplicity. This section features fully worked examples of explicitly computing the Bernstein polynomial of $D$-modules, thus illustrating the main result. In Section \ref{sec:6}, we use the Bernstein polynomial and some of its properties to define the concept of the Krull-type dimension for $D$-modules. These concepts present a further application of the ideas of Section \ref{sec:5}. The paper concludes with a result about the Krull-type  dimension of a finitely generated $D$-module (Theorem \ref{Krull-type}).

\section{Preliminaries }
\label{sec:2}
 \setcounter{equation}{0}

Throughout this paper, every ring homomorphism is unitary (maps identity onto identity), and every subring of a ring contains the identity of the ring. By a module over a ring $R$ we mean a unitary left $R$-module. In what follows, we consider a Weyl algebra as an algebra of differential operators over a polynomial ring. More precisely, let $K$ be a field and $K[X]$ the ring of polynomials $K[x_1, \ldots , x_n]$ in $n$ commuting indeterminates with coefficients in $K$ ($n \ge 1)$.
We denote by $End_K (K[X])$ the $K$-algebra of linear operators (mappings) $K[X] \rightarrow K[X]$. This is a noncommutative ring with identity, whose operations are addition and composition of operators.

Let $f \in K[X]$ be a polynomial. Multiplication by $x_1, \ldots, x_n $ gives $K$-linear operators on $K[X]$ denoted by $\hat{x}_1, \ldots, \hat{x}_n $, respectively, acting on $f$ by the formula $\hat{x}_i (f) =x_i \cdot f$, for $i = 1, \ldots, n$. For simplicity of notation, we will write $x_i$ for both the operator $\hat{x}_i$ and the variable $x_i$. On $K[X]$ we also have pairwise commuting $K$-linear derivations $\p_1, \ldots , \p_n$ that are elements of $End_K (K[X])$ defined by $\p_i (f) = \p f / \p x_i$, for $i = 1, \ldots ,n$.
With this notation, we make the following definition.

\begin{definition}
The $\boldsymbol{n}$\textbf{th Weyl algebra}, denoted by $A_n(K)$, is the $K$-subalgebra of $End_K (K[X])$ generated by the operators
$x_1,\ldots , x_n$ 
and 
$\p_1, \ldots ,\p_n$. 
\end{definition} 

We will adopt the convention that $A_0(K) = K$. Furthermore, we will often write $A_n$ instead of $A_n(K)$ if the field over which the algebra is defined is clear from the context, or no emphasis is needed.
Although commutativity holds within each operator ($x_i x_j = x_j x_i$ and $\p_i \p_j = \p_j \p_i$ for any $i,i \in \{ 1, \ldots , n \}$, and $x_i \p_j = \p_j x_i$ whenever $i \neq j$), the Weyl algebra is not commutative. Indeed, for any $f \in K[X]$, we have 
\begin{align*}
    \p_i x_i (f) = x_i \frac{ \p f} {\p x_i} + f,
\end{align*}
obtained by applying the product rule for differentiation ---also called the Leibniz rule. Thus, 
$\p_i \cdot x_i = x_i \cdot \p_i + 1$, where $1$ is the identity operator.

The elements of $A_n$ are linear combinations, with coefficients in $K$, of the products of the generators $x_1,\ldots , x_n$ 
and 
$\p_1, \ldots ,\p_n$. 
To describe these elements, we use a multi-index notation. \textbf{Multi-indices} $\alpha$ and $\beta$ of size $n$ are elements of $\nn^n$, that is, $n$-tuples of nonnegative integers $\alpha = (\alpha_1, \ldots , \alpha_n)$ and $\beta = (\beta_1, \ldots , \beta_n)$.
In this notation, we set $x^{\alpha} = x_{1}^{\alpha_1} \cdots x_{n}^{\alpha_n}$
and 
$\p^{\beta} = \p_{1}^{\beta_1} \cdots \p_{n}^{\beta_n}$. 
We also define the \textbf{degrees} of the power products $x^\alpha$ and $\p^\beta$ as the numbers
$|\alpha | = \alpha_1 + \cdots + \alpha_n$ and 
$|\beta | = \beta_1 + \cdots + \beta_n$, respectively. 
A power product of the form $\theta = x^{\alpha} \p^\beta \in A_n$, with $\alpha, \beta \in \nn^n$, is called a \textbf{monomial} in $A_n$. 
The monomials $x^\alpha$ and $\p^\beta$ will be denoted by $\theta_x$ and $\theta_\p$, respectively.
The \textbf{degree} of such a monomial $\theta$ is defined as $|\alpha | + |\beta | $, and it is denoted by $deg (\theta)$.
Furthermore, we set the factorial of a multi-index $\alpha \in \nn^n$ as $\alpha! = \alpha_1! \cdots \alpha_n!$.
In what follows, $\Theta$ will denote the set of all monomials  $x^{\alpha} \p^{\beta}$ of $A_n$.
Moreover, for any $r \in \nn$, $\Theta (r)$ will denote the set $\{\theta \in \Theta  : deg(\theta) \le r \}$. 

The following proposition says that when considered as a vector space over $K$, the Weyl algebra has a particular basis. 
\begin{proposition}[{\cite[Proposition~1.2]{bio}}]
\label{basis}
Each element in $A_n$ can be written in a unique way as a finite sum
$\sum k_{\alpha, \beta} x^{\alpha} \p^\beta $, where the coefficients $k_{\alpha, \beta} \in K$. 
In other words, the set of monomials
$\mathcal{B} = \{ x^{\alpha} \p^\beta :\alpha, \beta \in \nn^n \} $ 
is a basis of $A_n$ as a $K$-vector space.
\end{proposition}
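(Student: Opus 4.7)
The plan is to split the statement into two parts: $\mathcal{B}$ spans $A_n$ over $K$, and $\mathcal{B}$ is $K$-linearly independent inside $End_K(K[X])$.

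For the spanning part, by definition each element of $A_n$ is a finite $K$-linear combination of words in the generators $x_1, \ldots, x_n, \p_1, \ldots, \p_n$. I would argue by induction on the length of such a word that it can be rewritten as a $K$-linear combination of elements of $\mathcal{B}$, that is, with all $x$'s to the left of all $\p$'s. The engine is the commutation $\p_i x_i = x_i \p_i + 1$ together with the fact that $\p_i$ commutes with $x_j$ for $j \neq i$, which iterate into the single identity
\[
\p_i \cdot x^{\alpha} \;=\; x^{\alpha}\p_i \;+\; \alpha_i\, x^{\alpha - e_i},
\]
where $e_i$ is the $i$-th standard basis vector and the second term is read as $0$ when $\alpha_i = 0$. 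From this one obtains $\p_i\cdot(x^{\alpha}\p^{\beta}) = x^{\alpha}\p^{\beta + e_i} + \alpha_i\, x^{\alpha - e_i}\p^{\beta}$, which lies in the $K$-span $V$ of $\mathcal{B}$, while left multiplication by $x_i$ trivially preserves $V$. Combined with $1 \in V$, an induction on word length in the generators then yields $V = A_n$.

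For linear independence, suppose
\[
L \;=\; \sum_{\alpha,\beta} k_{\alpha,\beta}\, x^{\alpha}\p^{\beta} \;=\; 0
\]
as an operator on $K[X]$, with not every $k_{\alpha,\beta}$ equal to zero. Let $T$ be the (finite) set of multi-indices $\beta$ for which some $k_{\alpha,\beta}\neq 0$, and pick $\beta^{\ast} \in T$ minimal in the componentwise partial order on $\nn^n$. I would then evaluate $L$ on the test polynomial $x^{\beta^{\ast}}$: the formula $\p^{\beta}(x^{\beta^{\ast}}) = \frac{\beta^{\ast}!}{(\beta^{\ast}-\beta)!}\,x^{\beta^{\ast}-\beta}$ for $\beta \le \beta^{\ast}$ (and $0$ otherwise), combined with the minimality of $\beta^{\ast}$, kills every term except those with $\beta = \beta^{\ast}$. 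Thus
\[
0 \;=\; L\bigl(x^{\beta^{\ast}}\bigr) \;=\; \beta^{\ast}!\, \sum_{\alpha} k_{\alpha,\beta^{\ast}}\, x^{\alpha}.
\]
Since $\beta^{\ast}!$ is a nonzero scalar in $K$ (the implicit characteristic-zero setting that makes the Weyl algebra act faithfully on $K[X]$) and $\{x^{\alpha}\}$ is a $K$-basis of $K[X]$, each $k_{\alpha,\beta^{\ast}}$ must vanish, contradicting the choice of $\beta^{\ast}$.

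The main obstacle, in my view, is arranging the uniqueness step so that a single evaluation isolates one coefficient family in one shot; the key trick is choosing $\beta^{\ast}$ minimal in the \emph{componentwise} partial order rather than in a total order like lexicographic, since only componentwise minimality forces all lower-order differential terms to annihilate $x^{\beta^{\ast}}$ automatically. The spanning argument is, by comparison, routine bookkeeping with the Leibniz rule. A secondary delicate point is that the scalar $\beta^{\ast}!$ must be nonzero in $K$, which is the standard reason for formulating this basis statement over a field of characteristic zero; in positive characteristic $p$ the operator $\p_i^{\,p}$ vanishes on $K[X]$ and $\mathcal{B}$ is manifestly $K$-dependent.
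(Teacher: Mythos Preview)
Your argument is correct. The paper does not prove this proposition at all; it simply cites it from Bj\"ork's book, so there is no in-paper proof to compare against. Your spanning argument via the commutation identity $\p_i x^{\alpha} = x^{\alpha}\p_i + \alpha_i x^{\alpha - e_i}$ and your independence argument via evaluation at $x^{\beta^{\ast}}$ with $\beta^{\ast}$ componentwise minimal are both standard and sound; the observation that componentwise (rather than total-order) minimality is what forces $\p^{\beta}(x^{\beta^{\ast}}) = 0$ for every other surviving $\beta$ is exactly the right point. Your remark that the independence step needs $\beta^{\ast}! \neq 0$ in $K$---and that in characteristic $p$ the operator $\p_i^{\,p}$ vanishes on $K[X]$, so the stated basis fails for the paper's operator-theoretic definition of $A_n$---is a genuine caveat the paper leaves implicit.
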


The Weyl algebra can be considered as a filtered ring by means of the following filtration on $A_n$ known as the \textbf{Bernstein filtration}, which will be defined using the degree of the elements of $A_n$.
For $r \ge 0$, let $B_r$ denote the $K$-vector space of all elements of $A_n$ generated by $\{ x^\alpha \p^\beta : |\alpha| + |\beta | \le r \}$. These are finite-dimensional $K$-vector subspaces of $A_n$: 
$B_0 =K$; $B_1 = K +K x_1 + \cdots + K x_n + K \p_1 + \cdots + K \p_n $, and so on.

\section{Numerical Polynomials in one indeterminate}
\label{sec:3}
 \setcounter{equation}{0}

\begin{definition}
A polynomial $f(t)$ in one indeterminate $t$  with rational coefficients is called \textbf{numerical} if $f(t) \in \zz$ for all sufficiently large $t \in \nn$, that is, if there exists $r \in \nn$ such that $f(s) \in \zz$ for any $s \in \nn$ with $r \le s$.
\end{definition}

Clearly, every polynomial with integer coefficients is numerical. For a general example of a numerical polynomial with non-integer coefficients, we can consider the polynomial in one indeterminate $t$
\begin{align*}
    &\binom{t}{n} = \frac{t(t-1) \cdots (t-n+1)}{n!}
    \quad (n \ge 1);
    &\binom{t}{0}= 1.
\end{align*}

The following proposition, proved in \cite[Proposition~2.1.3]{kondra_99}, gives a canonical representation of a numerical polynomial.

\begin{proposition}
\label{canon}
Let $f(t) \in \qq[t]$ be a polynomial of degree $m$ in one indeterminate $t$. Then $f(t)$ is numerical if and only if it can be represented in the form
\begin{align}
\label{int_canon}
    f(t) =
            \sum^m_{i =0} a_i \binom{t+i}{i},
\end{align}
where the coefficients $a_0, \ldots , a_m$ are integers uniquely determined by $f(t)$. 
\end{proposition}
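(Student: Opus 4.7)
The first observation is that the family $\{\binom{t+i}{i} : i = 0, 1, \ldots, m\}$ forms a basis of the $\qq$-vector space $\qq[t]_{\le m}$, since $\binom{t+i}{i}$ is a polynomial in $t$ of degree exactly $i$ with leading coefficient $1/i!$. Consequently, any $f(t) \in \qq[t]$ of degree $m$ admits a unique expansion $f(t) = \sum_{i=0}^m a_i \binom{t+i}{i}$ with $a_i \in \qq$. The uniqueness part of the proposition is therefore automatic, and the easy ($\Leftarrow$) direction is immediate: if every $a_i$ lies in $\zz$, then since each $\binom{t+i}{i}$ takes integer values on $\nn$, so does $f$.

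For the nontrivial direction ($\Rightarrow$), the plan is to argue by induction on $m = \deg f$ using the forward difference operator $\Delta f(t) = f(t+1) - f(t)$. The base case $m = 0$ is trivial: $f \equiv a_0$, and numericality forces $a_0 \in \zz$. For the inductive step, I would first observe that if $f$ is numerical of degree $m$, then $\Delta f$ is numerical of degree $m-1$, since $\Delta f(s) = f(s+1) - f(s) \in \zz$ for all sufficiently large $s \in \nn$.

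The key computation is the identity $\Delta \binom{t+i}{i} = \binom{t+i}{i-1}$, which follows directly from Pascal's rule applied to $\binom{(t+i)+1}{i}$. Rewriting $\binom{t+i}{i-1} = \binom{(t+1)+(i-1)}{i-1}$ and performing the shift $s = t+1$, the polynomial $g(s) := \Delta f(s-1)$ admits the canonical-looking expansion $g(s) = \sum_{j=0}^{m-1} a_{j+1} \binom{s+j}{j}$. Since $g$ is numerical of degree $m-1$ (it is the difference polynomial of a numerical polynomial, up to an integer shift in argument), the induction hypothesis forces $a_1, \ldots, a_m \in \zz$.

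To finish, I would recover $a_0$ by evaluating the identity $f(s_0) = a_0 + \sum_{i=1}^m a_i \binom{s_0+i}{i}$ at any $s_0 \in \nn$ large enough that $f(s_0) \in \zz$; the sum on the right is then an integer, so $a_0 \in \zz$ as well. The main delicate point will be handling the argument shift in the inductive step cleanly, so that $\Delta f$ is genuinely expressed in the basis $\{\binom{s+j}{j}\}_{j=0}^{m-1}$ rather than a translated version of it; once Pascal's rule is invoked in the form above, this is essentially a bookkeeping matter.
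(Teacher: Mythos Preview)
Your argument is correct: the forward-difference induction via $\Delta\binom{t+i}{i}=\binom{t+i}{i-1}$ is the standard route, and the shift $s=t+1$ to realign the basis is handled cleanly. Note, however, that the paper does not actually supply its own proof of this proposition; it merely cites \cite[Proposition~2.1.3]{kondra_99}, where the proof is essentially the same difference-operator induction you outline (indeed, the paper later invokes $\Delta^{2n}$ in the proof of Lemma~\ref{exact_dim}, tacitly relying on this same mechanism).
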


For the remaining of this section we will deal with subsets of $\nn^m$, where $m$ is a positive integer. If $A \subseteq \nn^m$, $V_A$ will denote the set of all $m$-tuples $v = (v_1, \ldots, v_m) \in \nn^m$ that are no greater than or equal to any point of $A$ with respect to the product order on $\nn^m$. The \textit{product order} on $\nn^m$ is a partial order $\le_P$ such that $(c_1, \ldots, c_m) \le_P (c'_1, \ldots, c'_m)$ if and only if $c_i \le c'_i$, for all $i =1, \ldots, m$. Clearly, an element $v = (v_1, \ldots, v_m) \in \nn^m$ belongs to $V_A$ if and only if for any element $(a_1, \ldots, a_m) \in A$ there exist $i \in \nn$, with $1 \le i \le m$, such that $a_i > v_i$.
Also, we let $\nn_p = \{1, \ldots, p \}$ $(p \in \zz, p \ge 1)$ be the set of the first $p$ integers. 
Furthermore, for any $r \in \nn$, we will denote by $A(r)$ the subset of $A$ consisting of all $m$-tuples $(a_1, \ldots, a_m)$ such that $a_1 + \cdots + a_m \le r$.
\medskip

The next two propositions, proved respectively in \cite[Chapter~0,~Lemma~16]{kolch} and \cite[Proposition~2.2.11]{kondra_99}, introduce certain numerical polynomias associated with subsets of $\nn^m$, and give explicit formulas for their computation.

\begin{proposition}
\label{kolchin_dim}
Let $A \subseteq \nn^m$. There exists a numerical polynomial $\omega_A(t)$ with the following properties:
\begin{enumerate}[{\em (i)}]
    \item 
    $\omega_A(s) = Card\; V_A (s)$ for all sufficiently large $s \in \nn$ (as usual, $Card~V$ denotes the number of elements of a finite set $V$).
    \item
    $deg(\omega_A) \le m$; the equality occurs if and only if the set $A$ is empty, in which case we have
\begin{align*}
    \omega_A(t) = \binom{t + m}{m}.
\end{align*}
\item
$\omega_A (t) = 0$ if and only if $(0, \ldots, 0) \in A$.
\end{enumerate}
\end{proposition}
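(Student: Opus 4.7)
The plan is to reduce $A$ to a finite subset via Dickson's lemma and then count $V_A(s)$ by inclusion-exclusion over translated simplices. First I would apply Dickson's lemma on $\nn^m$ to conclude that the set $A^*$ of $\le_P$-minimal elements of $A$ is finite. Since $v \in V_A$ holds precisely when $v \not\ge_P a$ for every $a \in A$, and this condition is equivalent to the same statement restricted to $a \in A^*$, one has $V_A = V_{A^*}$; hence I may replace $A$ by $A^*$ throughout and assume $A$ is finite.

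Next, set $\Delta(s) = \{v \in \nn^m : v_1 + \cdots + v_m \le s\}$, of cardinality $\binom{s+m}{m}$, and let $U_a = \{v \in \nn^m : v \ge_P a\}$ for $a \in A^*$. Then $\Delta(s) \setminus V_A(s) = \Delta(s) \cap \bigcup_{a \in A^*} U_a$. The key observation is that for any nonempty $S \subseteq A^*$, $\bigcap_{a \in S} U_a = U_{b_S}$, where $b_S$ is the coordinate-wise maximum of the elements of $S$; the translation $v \mapsto v - b_S$ identifies $U_{b_S} \cap \Delta(s)$ with a simplex of side $s - |b_S|$, of cardinality $\binom{s - |b_S| + m}{m}$ whenever $s \ge |b_S|$. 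Inclusion-exclusion then yields, for all sufficiently large $s$,
\[
\text{Card}\, V_A(s) = \binom{s+m}{m} + \sum_{\emptyset \ne S \subseteq A^*} (-1)^{|S|} \binom{s - |b_S| + m}{m},
\]
which is manifestly a numerical polynomial $\omega_A(t)$ of degree at most $m$, establishing (i).

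To deduce (ii) and (iii), I would read off information from this explicit formula. If $A = \emptyset$, only the first term survives and $\omega_A(t) = \binom{t+m}{m}$, which has degree $m$. If $A \ne \emptyset$, each $\binom{t - |b_S| + m}{m}$ has leading term $t^m/m!$, so the coefficient of $t^m$ in $\omega_A$ equals $\bigl(\sum_{S \subseteq A^*} (-1)^{|S|}\bigr)/m! = 0$ by the standard alternating-sum identity on a nonempty finite set, forcing $\deg \omega_A < m$. For (iii), the implication $(0,\ldots,0) \in A \Rightarrow V_A = \emptyset \Rightarrow \omega_A = 0$ is immediate; conversely, if $(0,\ldots,0) \notin A$, every $a \in A$ has some positive coordinate, so $(0,\ldots,0) \in V_A$, whence $\omega_A(s) \ge 1$ for all large $s$ and $\omega_A \not\equiv 0$. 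The main technical obstacle will be the inclusion-exclusion bookkeeping, specifically justifying the identification of each $U_{b_S} \cap \Delta(s)$ with a translated simplex and cleanly extracting the vanishing of the leading coefficient from the alternating-sum identity.
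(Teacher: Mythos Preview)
Your proposal is correct. Note, however, that the paper does not supply its own proof of this proposition: it is stated with a citation to Kolchin's \emph{Differential Algebra and Algebraic Groups} (Chapter~0, Lemma~16), so there is no in-paper argument to compare against directly.

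That said, your inclusion--exclusion route is essentially the argument underlying the explicit formula the paper records immediately afterward as Proposition~\ref{num_compu} (cited there from Kondrateva et al.): your expression
\[
\omega_A(t) \;=\; \sum_{S \subseteq A^*} (-1)^{|S|}\binom{t - |b_S| + m}{m}
\]
is exactly that formula once the sum is reorganized by the size $\ell = |S|$ of the subset. So you have effectively established both propositions in one pass. The reduction to the finite minimal set $A^*$ via Dickson's lemma is the same observation the paper makes in the remark between the two propositions, the identification of $\bigcap_{a\in S} U_a$ with the translated orthant $U_{b_S}$ and of $U_{b_S}\cap\Delta(s)$ with a simplex of side $s-|b_S|$ are straightforward, and the vanishing of the degree-$m$ coefficient via $\sum_{S\subseteq A^*}(-1)^{|S|}=0$ for nonempty $A^*$ is clean. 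Your treatment of (iii) is also fine.
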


\begin{definition}
The polynomial $\omega_A (t)$, whose existence is established in the preceding proposition, is called the \textbf{Kolchin dimension polynomial} of the set $A \subseteq \nn^m$.
\end{definition}

\textbf{Remark.} In \cite[Chapter~0,~Lemma~16]{kolch} it is shown that any infinite subset of $\nn^m$ contains a sequence that is strictly increasing with respect to the product order. It follows that if $A \subseteq \nn^m$, then the set $A^*$ of all minimal points of $A$ with respect to the product order is finite. It is easy to see that $\omega_A (t) = \omega_{A^*} (t)$. The following proposition, proved in \cite[Chapter~II]{kondra_99}, provides a way to compute the Kolchin polynomial of any finite subset of $\nn^m$, and therefore, of any subset of $\nn^m$.

\begin{proposition}
\label{num_compu}
Let $A = \{a_1, \ldots , a_p \}$ be a finite subset of $\nn^m$, where $m$ is a fixed positive integer, and let $a_i = (a_{i1}, \ldots, a_{im})$ for $i = 1, \ldots, p$. Furthermore, for any $\ell \in \zz$, $0 \le \ell \le p$, let $\Lambda(\ell, p)$ denote the set of all $\ell$-element subets of $N_p$, and for any $\lambda \in \Lambda(\ell,p)$ let $\bar{a}_{\lambda j} = max\{a_{ij} : i \in \lambda \}$ for $1 \le j \le m$ (in other words, if $\lambda = \{i_1, \ldots, i_l\}$, then $\bar{a}_{\lambda j}$ denotes the greatest $jth$ coordinate of the elements $a_{i_1}, \ldots, a_{i_l}$). Also, let $b_\lambda = \sum_{j=1}^m \bar{a}_{\lambda j}$. Then
\begin{align*}
    \omega_A(t) =
    \sum_{\ell =0}^p (-1)^\ell 
        \sum_{\lambda \in \Lambda(\ell,p)} \binom{t +m - b_\lambda}{m}.
\end{align*}
\end{proposition}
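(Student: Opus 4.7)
The plan is to prove the formula by inclusion-exclusion, using complementary counting inside the standard simplex. I would first observe that the set of $v \in \nn^m$ with $|v| \le s$ has cardinality $\binom{s+m}{m}$, and that $v$ fails to lie in $V_A$ precisely when $v$ dominates at least one of the $a_i$ in the product order. Hence the complement $\nn^m \setminus V_A$ equals the union of upper sets $\bigcup_{i=1}^p (a_i + \nn^m)$, and
$$|V_A(s)| \;=\; \binom{s+m}{m} \;-\; \Bigl| \bigcup_{i=1}^p \{v \in a_i + \nn^m : |v| \le s\} \Bigr|.$$
Expanding the right-hand cardinality by inclusion-exclusion over nonempty $\lambda \subseteq N_p$ would reduce the task to counting elements of each intersection $\bigcap_{i \in \lambda}(a_i + \nn^m)$ satisfying $|v| \le s$.

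The key simplification is that $v$ lies in $\bigcap_{i \in \lambda}(a_i + \nn^m)$ iff $v_j \ge \bar{a}_{\lambda j}$ for every $j$, so this intersection is itself a single upper set $\bar{a}_\lambda + \nn^m$, where $\bar{a}_\lambda = (\bar{a}_{\lambda 1}, \ldots, \bar{a}_{\lambda m})$ has total degree $b_\lambda$. Writing $v = \bar{a}_\lambda + w$ produces a bijection between elements of this intersection with $|v| \le s$ and $w \in \nn^m$ with $|w| \le s - b_\lambda$, yielding $\binom{s - b_\lambda + m}{m}$ such elements once $s \ge b_\lambda$, and $0$ otherwise. Assembling the counts with the alternating signs, for $s \ge \max_\lambda b_\lambda$ I would obtain
$$|V_A(s)| \;=\; \sum_{\ell=0}^p (-1)^\ell \sum_{\lambda \in \Lambda(\ell,p)} \binom{s + m - b_\lambda}{m},$$
where the term $\ell = 0$ accommodates $\lambda = \emptyset$ with $b_\emptyset = 0$ and reproduces the ambient $\binom{s+m}{m}$.

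Finally, viewed as a polynomial in $t$, the right-hand side is a finite $\zz$-linear combination of the numerical polynomials $\binom{t+m-b_\lambda}{m}$ and is therefore itself numerical. Since it coincides with $\omega_A(s)$ for all sufficiently large $s \in \nn$ by Proposition \ref{kolchin_dim}(i), and two polynomials agreeing on a cofinite subset of $\nn$ are identical in $\qq[t]$, the desired equality follows. The main obstacle is not conceptual but careful bookkeeping: tracking the inclusion-exclusion signs to land on the advertised $(-1)^\ell$, and noting that the discrepancy between the polynomial $\binom{t+m-b_\lambda}{m}$ and the true count (which is $0$ when $s < b_\lambda$) is harmless, because the polynomial identity is only needed for $s$ large enough.
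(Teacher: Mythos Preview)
Your inclusion--exclusion argument is correct and is precisely the standard proof of this formula: the complement of $V_A$ in the simplex $\{v:|v|\le s\}$ is the union of the translated orthants $a_i+\nn^m$, their $\lambda$-fold intersections collapse to the single orthant $\bar a_\lambda+\nn^m$ of weight $b_\lambda$, and the shift $v\mapsto v-\bar a_\lambda$ gives the count $\binom{s+m-b_\lambda}{m}$ for $s$ large. The paper itself does not supply a proof of this proposition; it merely cites \cite[Proposition~2.2.11]{kondra_99}, where the argument given is exactly this inclusion--exclusion computation, so your approach matches the intended one.
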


\section{Gröbner Bases for \texorpdfstring{$D$}{}-modules}
\label{sec:4}
 \setcounter{equation}{0}

In this section, we adapt the classical technique of Gröbner bases to $D$-modules or modules over the Weyl alegbra $A_n$. The theory we develop here will be used in the next section to prove the existence of the Bernstein polynomial and compute its invariants. Analogous methodologies have been implemented by A. Levin in \cite{levin_07}, \cite{levin_24.1}, \cite{levin_24.2}, and by Dönch and Levin in \cite{levin_13} for the computation of a dimension polynomial in two variables, showing that such a polynomial carries more invariants than the Bernstein dimension polynomial. The approach we take in what follows draws on some of the ideas presented in those works. Complete presentations of the classical theory can be found in \cite{adams} and \cite{beck} (which includes applications to the computation of Hilbert polynomials of graded and filtered modules over polynomial rings). Generalizations of the notion of Gröbner bases to rings of differential operators are carried out in \cite{insa} and \cite[Chapter~4]{kondra_99}.

We will begin by defining the concept of an order for the monomials of the Weyl Algebra, followed by a definition of divisibility. Using these ideas, we will then define the concept of reduction, which in turn will lead to an analog of the division algorithm for $D$-modules over $A_n$. Then we define the concept of a Gröbner basis in this context, giving equivalent conditions for it. Finally, we discuss how to compute Gröbner bases, developing the concept of $S$-polynomials and a version of Buchberger's algorithm for that purpose. Once this is accomplished, we will use this method to compute the Bernstein polynomial and its invariants in the next section.

Let $\Theta$ denote the set of all monomials in the Weyl algebra $A_n$. We can define a natural order on the monomials of $\Theta$ with respect to degree as follows.

\begin{definition}
An \textbf{order} on the set of all monomials $\Theta$ of $A_n$ is a total order $<$ on $\Theta$, such that
$ \p_n  < \cdots  < \p_1 < x_n < \cdots < x_1$, 
and if 
$\theta = x^{\alpha} \p^{\beta} = x_1^{\alpha_1} \cdots x_n^{\alpha_n} \p_1^{\beta_1} \cdots \p_n^{\beta_n}$ 
and 
$\theta' = x^{\gamma} \p^{\delta} = x_1^{\gamma_1} \cdots x_n^{\gamma_n} \p_1^{\delta_1} \cdots \p_n^{\delta_n}$ 
are two elements of $\Theta$, then
\begin{align*}
\theta < \theta'
 \quad
\Longleftrightarrow
    \quad(deg(\theta), \alpha_1, \ldots, \alpha_n, \beta_1, \ldots, \beta_n)  
    <_{lex}
    (deg(\theta'), \gamma_1, \ldots, \gamma_n, \delta_1, \ldots, \delta_n),
\end{align*}
where $<_{lex}$ denotes the lexicographic order on $\nn^{2n+1}$. 
\end{definition}

Note that we are using the symbol ``$<$'' in two different ways, both for the order relation on the multi-indices in $\nn^n$ and for the order introduced on the monomials of $\Theta$. The context will make clear which meaning is intended. 

Also note that since the set $\nn^{2n+1}$ is well-ordered with respect to the lexicographic order, our ordering of $\Theta$ turns it into a well-ordered set.
Additionally, if $\theta < \theta'$, we also write $\theta' > \theta$. In what follows, the monomial order on $\Theta$ introduced above is fixed.

\begin{definition}
Let $D = \sum k_{\alpha ,\beta} x^{\alpha} \p^{\beta}$ be a nonzero element of  $A_n$ and let $<$ be a monomial order on $\Theta$.
\begin{enumerate}[(i)]
    \item 
    The \textbf{leading monomial} of $D$, denoted by LM$(D)$, is the greatest monomial $x^\alpha \p^\beta$ that appears in $D$ with a nonzero coefficient. 
    \item
    The coefficient of the leading monomial of $D$ is called the \textbf{leading coefficient} of $D$ and is denoted by LC$(D)$. 
    \item
    The \textbf{leading term} of $D$ is defined as LT$(D) = $ LC$(D) \cdot$ LM $(D)$.
\end{enumerate}
\end{definition}

Next, we define a notion of divisibility for the elements of $A_n$ as follows. 
Let 
$ \theta = x^{\alpha} \p^{\beta} = x_{1}^{\alpha_1} \ldots x_{n}^{\alpha_n} \p_{1}^{\beta_1} \ldots \p_{n}^{\beta_n}$,
and
$\theta' = x^{\gamma} \p^{\delta} = x_{1}^{\gamma_1} \ldots x_{n}^{\gamma_n} \p_{1}^{\delta_1} \ldots \p_{n}^{\delta_n}$ 
be two elements of $\Theta$. 
We say that $\theta$ \textbf{divides} $\theta'$ if $x^\alpha$ divides $x^\gamma$ and $\p^\beta$ divides $\p^\delta$, that is, 
$\alpha_i \leq \gamma_i$ and $\beta_i \leq \delta_i$ for $i = 1, \ldots ,n$. In this case we also say that $\theta'$ is a \textbf{multiple} of $\theta$ and write $\theta | \theta'$.

According to this definition, if $\theta | \theta'$, then there exist elements $\theta_0, \theta_1, \ldots \theta_k \in \Theta$ such that 
$\theta' = \theta_{0} \theta - \sum_{i=1}^{k} \theta_i$,
where the  
$deg (\theta_{0}) + deg(\theta)  = deg (\theta')$, 
and
$deg (\theta_i) < deg (\theta')$
for $i= 1, \ldots ,k$. The monomial $\theta_0$ is denoted by
    $
    \frac{\theta'}{\theta}$.

To illustrate, if $n =1$, then $\theta = x\p^2$ divides $\theta' = x^{2}\p^3$, and we can write $\theta' = \theta_{0} \theta - \theta_1$, where $\theta_0 = x\p$, and $\theta_1 = x\p^2$.

By the \textbf{least common multiple} of two elements $\theta', \theta'' \in \Theta$ we mean the element 
lcm$(\theta', \theta'') = $ lcm$(\theta'_x, \theta''_x) \cdot$lcm$(\theta'_\p, \theta''_\p)$.
Thus, if 
$\theta =$ lcm$(\theta', \theta'')$,
then both
$\theta' | \theta$, $\theta'' | \theta$, 
and whenever $\theta' | \tau $ and $\theta'' | \tau $, for some $\tau \in \Theta$, 
we have $\theta | \tau$.

Let $E$ be a free $A_n(K)$-module with a finite set of free generators $\{e_1, \ldots ,e_m \}$. Then $E$ can be regarded as a $K$-vector space with basis 
$\Theta e = \{ \theta e_i : \theta \in \Theta, 1 \le i \le m \}$ 
whose elements are called \textbf{monomials}.

Since the set of all monomials $\Theta e$ is a basis of the $K$-vector space $E$, every nonzero element $f \in E$ has a unique representation of the form
\begin{align}
\label{f_repre}
    f 
    =
    a_{1} \theta_{1} e_{i_1} + \cdots + a_m \theta_{m} e_{i_m},
\end{align}
where $\theta_j \in \Theta$, $0 \neq a_j \in K$ $(1 \le j \le m)$ and the monomials 
$\theta_1 e_{i_1}, \ldots   ,\theta_m e_{i_m}$
are all distinct. We say that a monomial $u$ \textit{appears in f} (or that \textit{f contains $u$}) if $u$ is one of the monomials $\theta_k e_{i_k}$ in the representation  (\ref{f_repre}), that is, the coefficient of $u$ in $f$ is not zero. 
The greatest monomial that appears in $f$ with a nonzero coefficient is called the \textit{leading monomial} of $f$ and is denoted by $LM(f)$. The coefficient of the leading monomial of $f$ is called the \textit{leading coefficient} of $f$ and is denoted by $LC(f)$. The \textit{leading term} of $f$ is defined as $LT(f) =  LC(f) \cdot LM (f)$.

We say that a monomial $u = \theta' e_i$ is a \textbf{multiple} of a monomial $v = \theta e_j \quad (\theta, \theta' \in \Theta, 1 \le i,j \le m)$ if $i = j$ and $\theta | \theta'$. In this case we also say that $v$ divides $u$, write $v | u$, and define
\begin{align*}
    \frac{u}{v}
    \coloneqq 
    \frac{\theta'}{\theta}.
\end{align*}

If $c_1 u$ and $c_2 v$ are terms, their ratio is defined as $\frac{c_1 u}{c_2 v} = (c_1 c_2^{-1}) \frac{\theta'}{\theta}$. Then we say that the term $c_2 v$ \textit{divides} $c_1 u$.

The \textit{least common multiple} of two monomials $w_1 = \theta_1 e_i$ and $w_2 = \theta_2 e_j $ is defined as 
\begin{align*}
    \text{lcm}(w_1,w_2) 
    \coloneqq 
    \begin{cases}
    \text{lcm}(\theta_1, \theta_2) e_i,  &\text{if } i = j, \\
    0,                              &\text{if } i \neq j.
    \end{cases}
\end{align*}
We will consider a monomial order on the set $\Theta e$ defined as follows. 
For monomials $\theta e_i = x^\alpha \p^\beta e_i$ and
 $\theta' e_j = x^\delta \p^\gamma e_j$ of $\Theta e$, we say that
 \begin{align*}
    \theta e_i 
    <
    \theta' e_j 
    \Longleftrightarrow
\begin{cases}
&\theta < \theta' \\
&\text{or} \\
&\theta = \theta' \; \; \text{and} \;\; i<j.
\end{cases}
\end{align*}

\begin{example}
Consider a free $A_2$-module $E$ with two free generators $e_1$ and $e_2$. 
Let $f = -2 x^3_1 \p_1 e_1 - \p^3_2 + 5x_2 e_1 + 3x_1 \p^2_1 e_2 + 4x_2 e_2 +\p^2_2e_2 \in E$. 
Arranging the monomials of $f$ in decreasing order, we get
\begin{align*}
    f = -2 x^3_1 \p_1  e_1  + 3x_1 \p^2_1 e_2 - \p^3_2e_1 +\p^2_2 e_2 + 4x_2 e_2 + 5x_2e_1,
\end{align*}
and so $LM(f) = x^3_1 \p_1  e_1 $, $LC(f) = -2$, and $LT(f) = -2 x^3_1 \p_1 e_1 $.
\end{example}

\begin{definition}
Let $f, g \in E$, with $g \neq 0$. We say that $f$ is \textbf{reduced} with respect to $g$ if $f$ does not contain any multiple of $LM (g)$.
\end{definition}

\begin{definition}
Given $f, g, h \in E$, with $g \neq 0$, we say that the element $f$ reduces to $h$ modulo $g$ in one step, written
\begin{align*}
    f 
    \xrightarrow {g}
    h,
\end{align*}
if and only if $f$ contains some term $w$ with coefficient $a \neq 0$ such that LM$(g) | w$, and
\begin{align*}
    &h 
    =
    f - \frac{a w}{LT(g)}g.
\end{align*}
\end{definition}
\begin{example}
Let $f = \p^3_1 e_1 + 2x^3_1e_1 + 3x^2_1 \p_2 e_2 + 3x_2 \p^2_1 e_2 + \p^2_1 e_2 +4x_2 e_2$
and
$g = x_1 e_1 + e_1 + \p^2_1 e_2$ elements of the free $A_2$-module $E$ with free generators $e_1$ and $e_2$.
Then $LM(g) = \p^2_1 e_2$, and we see that the terms  $x^2_1 \p_2 e_2 $ and $\p^2_1 e_2$ of $f$ are divisible by $LM(g)$. Selecting the first of these terms, we can perform reduction modulo $g$, where $h_1 = f - \frac{3x_2 \p^2_1 e_2}{\p^2_1 e_2} g$, to obtain 
\begin{align*}
     f 
    \xrightarrow {g} h_1 =
    \p^3_1 e_1+ 2x^3_1 e_1 -3x_1 x_2 e_1 -3x_2 e_1 + 3x^2_1\p_2 e_2 + \p^2_1 e_2 + 4x_2e_2.
\end{align*}
After this step, $h_1$ still contains a term divisible by $LM(g)$, namely, $\p^2_1 e_2$. So we perform a second reduction step to obtain
\begin{align*}
     h_1
     \xrightarrow {g} h_2 =
    \p^3_1 e_1+ 2x^3_1 e_1 -3x_1 x_2 e_1 -3x_2 e_1 - x_1 e_1 -e_1 + 3x^2_1\p_2 e_2 + 4x_2e_2.
\end{align*}
And since $h_2$ does not contain any multiple of $LM(g)$, $h_2$ is reduced with respect to $g$.
\end{example}
\begin{definition}
\label{redu_s}
Let $f, h \in E$, and let $G = \{g_1, \ldots ,g_s\}$ be a finite set of nonzero elements of $E$.  
We say that $f$ reduces to $h$ modulo $G$, denoted
\begin{align*}
    f 
    \xrightarrow{G}
    h,
\end{align*}
if and only if there exist elements 
$g^{(1)}, g^{(2)}, \ldots ,g^{(t)} \in G$ 
and
$h^{(1)}, h^{(2)}, \ldots ,h^{(t-1)} \in E$ 
such that
\begin{align*}
    f 
    \xrightarrow{g^{(1)}}
    h^{(1)}
    \xrightarrow{g^{(2)}}
    h^{(2)}
    \cdots
    \xrightarrow{g^{(t-1)}}
    h^{(t-1)}
    \xrightarrow{g^{(t)}}
    h.
\end{align*}
\end{definition}
\begin{example}
Let $g_1 = x^2 \p e_1 - \p e_1 + x^2 e_2$ and $g_2 = x e_1 + \p^2 e_2 -x e_2 \in E$, where $E$ is a free $A_1$-module with free generators $e_1$ and $e_2$. Let $G =\{ g_1, g_2 \}$ and $f = \p^2 e_1 +2x^2 \p e_1 + \p^2 e_2$. Then
\begin{align*}
     f 
    \xrightarrow{G}
   \p^2 e_1 - x e_1 + 2 \p e_1 - 2x^2 e_2 + x e_2.
\end{align*}
To obtain this, we successively eliminate any multiples of $LM(g_1)$ and $LM(g_2)$ that appear in $f$. Accordingly, in the reduction process shown below, we first compute $ h_1 = f - \frac{2x^2 \p e_1}{x^2 \p e_1} g_1$, followed by $h = h_1 - \frac {\p^2 e_2}{\p^2 e_2}g_2$ in the next step:
\begin{align*}
    f = \p^2 e_1 +2x^2 \p e_1 + \p^2 e_2
     &\xrightarrow{g_1}
        \p^2 e_1 + 2\p e_1 -2x^2 e_2 + \p^2 e_2  \\
     &\xrightarrow{g_2} 
        \p^2 e_1 - x e_1 + 2 \p e_1 - 2x^2 e_2 + x e_2.
\end{align*}
Notice that $h = \p^2 e_1 - x e_1 + 2 \p e_1 - 2x^2 e_2 + x e_2$ cannot be reduced further by $g_1$ or $g_2$. This is because no term of $h$ is divisible by $LM(g_1) = x^2 \p e_1$ or by $LM(g_2) = \p^2 e_2$. 
\end{example}
\begin{definition}
An element $r \in E$ is said to be \textit{reduced with respect} to a set 
$G = \{g_1, \ldots ,g_s\}$
if $r = 0$ or no monomial that appears in $r$ is divisible by one of the LM$(g_i),~i = 1, \ldots s$.
If  
$ 
    f 
    \xrightarrow{G}
    r 
$
and $r$ is reduced with respect to $G$, then we call $r$ a remainder for $f$ with respect to $G$.
\end{definition}
The reduction process described in the preceding definitions allows us to state a version of the division algorithm analogous to the division algorithm for polynomials. The specific process and its justification is encapsulated in the following theorem and its proof, while the algorithm is given as Algorithm \ref{algo_1} below.

\begin{theorem}
\label{division}
With the notation above, let $f \in E$, and let $G = \{g_1, \ldots ,g_s\}$ be a finite subset of $E$.
Then there exist elements 
$r \in E$ and $q_1, \ldots ,q_s \in A_n(K)$ 
such that
\begin{align*}
    f 
    =
    q_{1}g_{1} + \cdots + q_{s}g_{s} +r,
\end{align*}
where $r$ is reduced with respect to $G$, and 
LM$(f) =$ max$\{\text{LM}(q_1 g_1), \ldots ,\text{LM}(q_s g_s), \text{LM}(r) \} $.

\end{theorem}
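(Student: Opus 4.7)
The plan is to prove the theorem by strong induction on the leading monomial $\mathrm{LM}(f) \in \Theta e$, using that $\Theta e$ is well-ordered under the chosen monomial order. The base case $f = 0$ is trivial: take $r = 0$ and $q_i = 0$ for all $i$. For the inductive step with $f \neq 0$, I would split into two cases according to whether $\mathrm{LM}(f)$ is divisible by some $\mathrm{LM}(g_i)$.

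In the first case, pick an index $i$ with $\mathrm{LM}(g_i) \mid \mathrm{LM}(f)$, set $\tau = \mathrm{LT}(f)/\mathrm{LT}(g_i) \in A_n(K)$, and form $f' = f - \tau g_i$. One then argues that $\mathrm{LM}(\tau g_i) = \mathrm{LM}(f)$ with matching leading coefficient, so that $\mathrm{LM}(f') < \mathrm{LM}(f)$. Applying the inductive hypothesis to $f'$ produces a decomposition $f' = \sum_j q'_j g_j + r'$ with the required properties; taking $q_i := q'_i + \tau$ and $q_j := q'_j$ for $j \neq i$ yields the desired decomposition of $f$. The maximum condition holds because $\mathrm{LM}(\tau g_i) = \mathrm{LM}(f)$ strictly dominates every leading monomial arising from $f'$, so no cancellation at the top occurs and $\mathrm{LM}(q_i g_i) = \mathrm{LM}(f)$.

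In the second case, no $\mathrm{LM}(g_i)$ divides $\mathrm{LM}(f)$, so I would peel off the leading term: apply the inductive hypothesis to $f - \mathrm{LT}(f)$ to obtain $f - \mathrm{LT}(f) = \sum_j q'_j g_j + r'$, and set $r := r' + \mathrm{LT}(f)$, $q_j := q'_j$. The resulting $r$ is reduced with respect to $G$, since $\mathrm{LT}(f)$ is not divisible by any $\mathrm{LM}(g_j)$ by the case hypothesis, and no monomial of $r'$ is divisible either by induction. The maximum condition again follows because $\mathrm{LM}(r) = \mathrm{LM}(f)$ dominates everything produced from $f - \mathrm{LT}(f)$.

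The main technical obstacle is the non-commutativity of $A_n(K)$: the claim $\mathrm{LM}(\tau g_i) = \mathrm{LM}(f)$ with leading coefficient $\mathrm{LC}(f)$ is not a matter of formally ``multiplying monomials,'' since commuting $\partial$'s past $x$'s produces correction terms of lower degree. This is precisely where the choice of a degree-based lexicographic order pays off. Because the first coordinate in the order is $\deg$, and because multiplication in $A_n$ respects the Bernstein filtration in the sense that $B_p B_q \subseteq B_{p+q}$, one checks via the identity $\theta' = \theta_0 \theta - \sum_{j} \theta_j$ (with $\deg(\theta_j) < \deg(\theta')$) introduced in the definition of divisibility that $\tau \cdot \mathrm{LT}(g_i)$ equals $\mathrm{LT}(f)$ plus terms of strictly smaller degree. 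A parallel estimate shows that $\tau$ multiplied against the non-leading part of $g_i$ contributes only terms of degree strictly less than $\deg(\mathrm{LM}(f))$. Combining these two observations yields $\mathrm{LM}(\tau g_i) = \mathrm{LM}(f)$ in the chosen order, and the induction then closes the proof.
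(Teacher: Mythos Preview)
Your induction on $\mathrm{LM}(f)$ is sound and gives a legitimate alternative to the paper's argument. The paper does not induct on $\mathrm{LM}(f)$; instead it introduces the $G$-\emph{leader} of an element $h$ --- the largest monomial appearing in $h$ that is divisible by some $\mathrm{LM}(g_i)$ --- and shows that a single reduction step strictly lowers the $G$-leader, so the process terminates at an element $r$ with no $G$-leader, i.e.\ one reduced with respect to $G$. This sidesteps your Case~2 entirely: monomials not divisible by any $\mathrm{LM}(g_i)$ are simply left untouched throughout, rather than peeled off one at a time into the remainder. Your route is the standard textbook division algorithm for commutative polynomial rings and works equally well here; it also makes the verification of the $\max$ condition slightly more transparent.

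There is, however, a genuine error in your non-commutativity paragraph. You assert that ``$\tau$ multiplied against the non-leading part of $g_i$ contributes only terms of degree strictly less than $\deg(\mathrm{LM}(f))$.'' This is false: a non-leading monomial $\mu$ of $g_i$ may have the \emph{same} degree as $\mathrm{LM}(g_i)$ (being smaller only in the lexicographic tiebreak or in the $e$-index), and then the top-degree part of $\tau\mu$ has degree equal to $\deg(\mathrm{LM}(f))$. What you actually need is that the order on $\Theta e$ is compatible with left multiplication by monomials: if $\mu < \nu$ in $\Theta e$, then the leading monomial of $\theta_0\mu$ is strictly less than that of $\theta_0\nu$ for every $\theta_0 \in \Theta$. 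This holds because adding the fixed exponent tuple of $\theta_0$ to both sides preserves the degree-lex comparison (and the $e$-index is unchanged), while the Weyl-algebra correction terms arising from commuting $\partial$'s past $x$'s are of strictly lower degree and hence strictly smaller in the order. With this compatibility in hand, $\mathrm{LM}(\tau g_i) = \mathrm{LM}(f)$ follows and your induction closes.
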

\begin{proof}
If $f$ is reduced with respect to $G$, then the statement is obvious, because we can set $r =f$. Suppose that $f$ is not reduced with respect to $G$.
In what follows, a term $w_h$ will be called a $G$-\textit{leader} of an nonzero element $h \in E$ if $w_h$ is the greatest among the monomials $w$ satisfying the following properties:
\begin{enumerate}[(i)]
    \item 
    $w$ appears in $h$;
    \item
    $w$ is a multiple of some $LM(g_i)$, for $1 \le i \le s$.
\end{enumerate}
Let $w_f$ be the $G$-leader of an element $f \in E$, and let $c_f$ be the coefficient of $w_f$ in the representation (\ref{f_repre}) of $f$. Then $LM(g_i) | w_f$, for some $i$, $1 \le i \le s$. Without loss of generality, we may assume that $i$ corresponds to the greatest leading monomial $g_i$ satisfying the above conditions. 
Let $\frac{w_f}{LM(g_i)} = \eta$, so that we can write
$f = \eta LT(g_i) +S_i$, where $S_i$ denotes an element of $E$ whose $G$-leader is lower than $w_f$.

Let 
\begin{align*}
    f' &= f - c_f (LC(g_i))^{-1} \eta g_i.
\end{align*}
Clearly, then, $f'$ does not contain $w_f$. Furthermore, $f'$ cannot contain any term $w'$ such that $w_f < w'$, with $LM(g_i) | w'$. Indeed, by the choice of $w_f$ such a term $w'$
cannot appear in $f$; it cannot appear in $\eta g_i$  either because $LM(\eta g_i) = w_f < w'$. Thus, $w'$ cannot appear in $f' = f - c_f (LC(g_i))^{-1} \eta g_i $, and so the $G$-leader of $f'$ is strictly less than $w_f$. Applying the same procedure to $f'$ and continuing this way, we obtain an element $r \in E$ such that $f-r \in A_n g_1 + \cdots A_n g_s$, and $r$ is reduced with respect to $G$.
\end{proof}
\begin{algorithm}[H]
\label{algo_1}
   \SetAlgoNoLine
  \KwIn{$f, g_1, \ldots, g_s \in E$ with $g_i \neq 0 \; (1 \le i \le s)$}
  \KwOut{An element $r \in E$ such that there exist $q_1, \cdots , q_s \in A_n$
  with $f = q_{1}g_{1} + \cdots + q_{s}g_{s} +r,$  and $r$ is reduced with respect to $G$}
  \textbf{Initialization:} 
  $q_1 \coloneqq 0, \cdots , q_s \coloneqq 0, r \coloneqq 0, g \coloneqq f$ \\
\While{$g \neq 0$ and there exist $i \in \{1, \cdots, s\}$, and a monomial $w$ that appears in $g_i$ with nonzero coefficient $c$ such that LM$(g_i) |w$}{
\smallskip
    $z \coloneqq$ the greatest monomial $w$ satisfying the above conditions;  \\
    $k \coloneqq$ the least number $i$ for which $LM(g_i) | z$. \\
    $q_k \coloneqq q_k + \frac{cz}{LT(g_k)}g_k$; \quad $g \coloneqq g - \frac{cz}{LT(g_k)}g_k$.
    }
    \caption{Reduction Algorithm}
\end{algorithm}
\medskip

We are now ready to define Gröbner bases for $D$-modules.

\begin{definition}
Let $N$ be a submodule of a free $A_n$-module $E$ with basis $\{ e_1, \ldots , e_m \}$.
A finite subset of nonzero elements 
$G = \{g_1, \ldots ,g_s\}$
contained in $N$ is called a \textbf{Gröbner basis} for $N$ if for any nonzero element $f \in N$, there exist $g_i \in G$ such that 
LM$(g_i)$ divides LM$(f)$. We say that the set $G$ is a  Gröbner basis provided $G$ is a  Gröbner basis for the $A_n$-submodule 
$N = A_n g_1 + \cdots + A_n g_s$ 
of $E$ it generates.
\end{definition}

Next we introduce the analog of the concept of $S$-polynomials for $D$-modules. This construction is the main ingredient in finding a Gröbner basis from a set of generators of an $A_n$-module.
\begin{definition}
\label{s_poly}
Let $f$ and $g$ be two elements in the free $A_n$-module $E$. Let $L = lcm ($LM$(f), $LM$(g))$.
The element
\begin{align*}
    S(f,g)
    =
    \frac{L}{LT(f)}f
    -
     \frac{L}{LT(g)}g
\end{align*}
is called the \textbf{S-polynomial} of $f$ and $g$.
\end{definition}

\begin{example}
Let $E$ be a free $A_2$-module with free generators $e_1$ and $e_2$. Consider the elements $f,g \in E$, given by
$f = x_1^2 e_1 + 5x_1 \p_1^2 e_2 +x_2 e_2$, and $g = x_1^2 \p_1 e_1 + 3x_1^3 \p_1 e_2 + \p_1 e_2 $. Then
$LM(f) = x_1 \p^2_1 e_2$, $LM(g) = x^3_1 \p_1 e_2$ and $lcm (LM(f), LM(g)) = x^3_1 \p^2_1 e_2$. Thus
\begin{align*}
S(f, g) &=
    \frac{x_1^3 \p_1^2 e_2}{5x_1 \p_1^2 e_2}f 
      - \frac{x_1^3 \p_1^2 e_2}{3x_1^3 \p_1 e_2}g = 
    \frac{x_1^2}{5}f - \frac{\p_1}{3}g  \\
    &= \frac{1}{5}x_1^4 e_1 - \frac{1}{3} x_1^2 \p_1^2 e_1 - \frac{2}{3}x_1 \p_1 e_1 +\frac{1}{5}x_1^2 x_2 e_2 
    - 3x_1^2 \p_1 e_2 -\frac{1}{3}\p^2_1 e_2.
\end{align*}
Note that the leading monomial of each of the elements $(x_1^2 /5)f$ and $(\p_1 /3)g$ is $x_1^3 \p_1^2 e_2$ while the leading monomial of the $S$-polynomial is $x_1^4 e_1 < x_1^3 \p_1^2 e_2$.
\end{example}

\begin{lemma}
\label{combo}
Let $f_1, \ldots, f_s \in E$ $(s \ge 1)$ be such that $LM(f_i) =\delta > 0$ for all $i = 1, \ldots, s$. Let $f = \sum_{i=1}^s c_i \theta_i f_i$, where $\theta_i \in \Theta$, and $c_i \in K$ $(1 \le i \le s)$. If $LM(f) < \delta$, then $f$ is a linear combination, with coefficients in $K$, of $S(f_i,f_j)$, $1 \le i \le s$.
\end{lemma}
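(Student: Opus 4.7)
The plan is to use the classical Buchberger-style telescoping (Abel summation) argument. Because every $f_i$ has the same leading monomial $\delta$, the $S$-polynomials $S(f_i,f_j)$ take a particularly simple form---just differences of normalized $f_i$'s---and the hypothesis $LM(f) < \delta$ forces a single leading-coefficient cancellation that drives the identity.

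Concretely, I would first normalize: set $d_i = LC(f_i)$ so that $LT(f_i) = d_i\delta$, and let $\tilde f_i = d_i^{-1}f_i$, which has leading term $\delta$ with coefficient $1$. The hypothesis $LM(f) < \delta$ together with $f = \sum c_i\theta_i f_i$ prevents any nontrivial $\theta_i$ from contributing to the leading degree (since then $LM(\theta_i f_i) = \theta_i\delta$ would strictly exceed $\delta$ and such top terms would have to cancel pairwise, reducing inductively to the trivial case), so one may write $f = \sum_{i=1}^s b_i\tilde f_i$ with $b_i = c_id_i$ up to terms below $\delta$. Reading off the coefficient of $\delta$ in $f$ then yields
\[
\sum_{i=1}^s b_i \;=\; 0.
\]

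Next I would invoke Abel summation: whenever $\sum b_i = 0$, one has the telescoping identity
\[
\sum_{i=1}^s b_i\tilde f_i \;=\; \sum_{i=1}^{s-1} \Bigl(\sum_{j=1}^i b_j\Bigr)(\tilde f_i - \tilde f_{i+1}).
\]
Finally, since $\mathrm{lcm}\bigl(LM(f_i),LM(f_{i+1})\bigr) = \delta$, Definition~\ref{s_poly} gives
\[
S(f_i,f_{i+1}) \;=\; \frac{\delta}{LT(f_i)}f_i \;-\; \frac{\delta}{LT(f_{i+1})}f_{i+1} \;=\; \tilde f_i - \tilde f_{i+1},
\]
so $f = \sum_{i=1}^{s-1}\bigl(\sum_{j\le i} b_j\bigr)\,S(f_i,f_{i+1})$, exhibiting $f$ as the required $K$-linear combination of $S$-polynomials.

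The main obstacle is conceptual rather than computational: one must check that the noncommutativity of $A_n$ does not upset the leading-order bookkeeping. In each product $\theta_i f_i$, the commutator corrections produced by moving $\theta_i$ past the remaining terms of $f_i$ have degree strictly less than $\deg(\theta_i)+\deg(\delta)$, so they sit below the relevant leading monomial and cannot affect the cancellation condition $\sum b_i = 0$. Once this is verified, the classical telescoping argument (cf.\ \cite{adams}, \cite{beck}) transfers verbatim and delivers the lemma.
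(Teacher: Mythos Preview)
Your core argument is exactly the paper's: write $f_i = a_i\delta + (\text{lower terms})$, deduce $\sum_i c_i a_i = 0$ from $LM(f) < \delta$, observe that $S(f_i,f_j) = a_i^{-1}f_i - a_j^{-1}f_j$ because the leading monomials coincide, and then telescope (Abel summation) to obtain
\[
f \;=\; \sum_{i=1}^{s-1}\Bigl(c_1a_1 + \cdots + c_ia_i\Bigr)\,S(f_i,f_{i+1}).
\]
The paper writes out this identical identity line by line.

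Where you diverge is in trying to accommodate nontrivial $\theta_i$. The paper's proof simply opens with ``$f = c_1 f_1 + \cdots + c_s f_s$,'' silently taking every $\theta_i = 1$; this is the only case invoked in Theorem~\ref{buch}, where the lemma is applied to the elements $\delta_i g_i$ themselves (which already share the common leading monomial $\delta$). Your handling of general $\theta_i$ has a genuine gap: you assert $f = \sum_i b_i\tilde f_i$ only ``up to terms below $\delta$,'' yet then conclude the \emph{exact} equality $f = \sum_{i<s}\bigl(\sum_{j\le i}b_j\bigr)S(f_i,f_{i+1})$. The discarded contributions---coming from $\theta_i$ acting on the sub-leading part of $f_i$, plus commutator terms---need not lie in the $K$-span of the $S(f_j,f_k)$. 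In fact the statement is false in that generality: in $A_1$ take $f_1 = x$, $f_2 = x+1$, $\theta_1 = \theta_2 = \partial$, $c_1 = -c_2 = 1$; then $\delta = x$, $f = -\partial$ with $LM(f) = \partial < x$, but $S(f_1,f_2) = -1$, and $-\partial$ is not a $K$-multiple of $-1$. So drop the attempt to treat nontrivial $\theta_i$ and restrict to $\theta_i = 1$, matching the paper; the remaining telescoping argument is then complete and identical to the original.
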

\begin{proof}
First we write $f_i = a_i\delta +$ lower terms, with $a_i \in K$. Then by hypothesis 
$\sum_{i =1}^s c_i a_i = 0$, 
since the $c_i$'s are in $K$. And by definition, 
$S(f_i, f_j) = \frac{1}{a_i} f_i - \frac{1}{a_j} f_j$, 
since $LM(f_i) = LM(f_j) =\delta$. Thus,
\begin{align*}
    f 
    &=
    c_1 f_1 + \cdots + c_s f_s.  \\
    &=
    c_1 a_1 \left( \frac{1}{a_1} f_1 \right) + \cdots + c_s a_s \left( \frac{1}{a_s} f_s \right)  \\
    &=
    c_1 a_1 \left( \frac{1}{a_1} f_1 - \frac{1}{a_2} f_2 \right) + (c_1 a_1 + c_2 a_2)  \left( \frac{1}{a_2} f_2 - \frac{1}{a_3} f_3 \right) + \cdots  \\
    &\quad\;+
    (c_1 a_1 + \cdots + c_{s-1} a_{s-1}) \left( \frac{1}{a_{s-1}} f_{s-1} - \frac{1}{a_s} f_s \right)
    + (c_1 a_1 + \cdots + c_s a_s) \frac{1}{a_s} f_s  \\
    &=
    c_1 a_1 S(f_1, f_2) + (c_1 a_1 + c_2 a_2) S(f_2, f_3) + \cdots + (c_1 a_1 + \cdots + c_{s-1} a_{s-1})  S(f_{s-1}, f_s),
\end{align*}
since $(c_1 a_1 + \cdots + c_s a_s) = 0$.
\end{proof}
With the preceding definition and lemma we are now ready to state and prove the theoretical foundation of the algorithm for computing Gröbner bases. 
\begin{theorem}[Buchberger's Criterion]
\label{buch}
Let $G = \{g_1, \ldots, g_s \}$ be a set of nonzero elements of an $A_n$-submodule of $E$ with respect to an order $<$. Then $G$ is a Gröbner basis for the submodule $N = \langle g_1, \ldots, g_s \rangle$ of $E$ if and only if for all $i \neq j$,
\begin{align*}
    S(g_i, g_j)
    \xrightarrow{G}
    0.
\end{align*}
\end{theorem}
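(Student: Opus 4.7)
The proof proceeds in two directions. The forward direction is almost immediate from Theorem \ref{division} and the definition of a Gröbner basis: each $S(g_i,g_j)$ lies in $N$, so the division algorithm produces a remainder $r$ reduced with respect to $G$ with $S(g_i,g_j)\xrightarrow{G}r$. Since $r\in N$ as well, if $r\ne 0$ the Gröbner basis property would force $LM(g_k)\mid LM(r)$ for some $k$, contradicting the reducedness of $r$; hence $r=0$.

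For the converse, assume every $S$-polynomial reduces to zero modulo $G$, and let $f\in N$ be an arbitrary nonzero element; the goal is to exhibit some $g_i$ with $LM(g_i)\mid LM(f)$. Among all representations $f=\sum_{i=1}^s h_ig_i$ with $h_i\in A_n$, I fix one minimizing $\delta\coloneqq\max_i LM(h_ig_i)$; such a minimum exists because $\Theta e$ is well-ordered. Expanding each $h_i=\sum_k c_{i,k}\theta_{i,k}$ into monomials and noting that $LM(\theta g)=\theta\cdot LM(g)$ (the commutator corrections from the Leibniz rule produce only terms of strictly lower degree in the Bernstein filtration), one has $LM(f)\le\delta$. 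If equality holds, then $\delta=\theta_{i,k}\cdot LM(g_i)$ for some pair $(i,k)$, and therefore $LM(g_i)\mid LM(f)$, as required.

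The remaining case $LM(f)<\delta$ must be excluded by contradicting minimality. I partition the sum into a ``top part'' $f^{\top}=\sum_{(i,k)\in T}c_{i,k}\theta_{i,k}g_i$, comprising those terms for which $\theta_{i,k}\cdot LM(g_i)=\delta$, and a ``lower part'' in which every term has leading monomial strictly below $\delta$. Since $LM(f)<\delta$, cancellation forces $LM(f^{\top})<\delta$, so Lemma \ref{combo} rewrites $f^{\top}$ as a $K$-linear combination of $S$-polynomials $S(\theta_{i,k}g_i,\theta_{j,l}g_j)$. Each such $S$-polynomial, because $\theta_{i,k}\cdot LM(g_i)=\theta_{j,l}\cdot LM(g_j)=\delta$, can be expressed as $\theta''\cdot S(g_i,g_j)$ plus terms of leading monomial below $\delta$, where $\theta''\in\Theta$ satisfies $\theta''\cdot\mathrm{lcm}(LM(g_i),LM(g_j))=\delta$. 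Invoking the hypothesis and Theorem \ref{division} yields representations $S(g_i,g_j)=\sum_k q_k^{(i,j)}g_k$ with $\max_k LM(q_k^{(i,j)}g_k)<\mathrm{lcm}(LM(g_i),LM(g_j))$; substituting these back produces a new representation of $f$ whose $\delta$-parameter is strictly smaller, contradicting minimality. The principal obstacle is the non-commutativity of $A_n$: the identity $LM(\theta g)=\theta\cdot LM(g)$ holds only modulo lower-degree commutator corrections, so at every step one must verify that these corrections remain strictly below the current $\delta$-threshold and do not quietly reintroduce a term at level $\delta$. Working throughout with the Bernstein filtration, under which $\deg(\theta\eta)=\deg\theta+\deg\eta$ exactly, makes this bookkeeping tractable.
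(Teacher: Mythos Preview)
Your proof is correct and follows essentially the same strategy as the paper: choose a representation of $f$ minimizing $\delta=\max_i LM(h_ig_i)$, apply Lemma~\ref{combo} to the top-degree part when $LM(f)<\delta$, rewrite the resulting $S(\theta_ig_i,\theta_jg_j)$ in terms of $S(g_i,g_j)$, and invoke the hypothesis together with Theorem~\ref{division} to contradict minimality. The only notable difference is that you explicitly track the commutator corrections arising from non-commutativity of $A_n$ (so that $S(\theta_ig_i,\theta_jg_j)=\theta''S(g_i,g_j)$ only modulo lower terms), whereas the paper writes the identity $S(\delta_ig_i,\delta_jg_j)=\tfrac{\delta}{\delta_{ij}}S(g_i,g_j)$ as if it held on the nose; your bookkeeping here is in fact the more careful of the two.
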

\begin{proof}
First suppose that $G  = \{g_1, \ldots, g_s \}$ is a Gröbner basis for $N = \langle g_1, \ldots, g_s \rangle$. Let $f \in E$. Then, by Theorem \ref{division}, there exists $r \in E$, reduced with respect to $G$, such that  $f 
    \xrightarrow{G}
    r$.
Thus $f - r \in N$ and so $f \in N$ if and only if $r \in N$. Clearly, if $r =0$ (that is,  $f 
    \xrightarrow{G}
    0$),
then $f \in N$. Conversely, if $f \in N$ and $r \neq 0$ then $r \in N$, and there exists $i \in \{ 1, \ldots, s\}$ such that LM$(g_i)$ divides LM$(r)$. This contradicts the fact that $r$ is reduced with respect to $G$. Thus, $r =0$ and  $f 
    \xrightarrow{G}
    0$.

To prove the implication in the other direction,
suppose that $S(g_i, g_j)  \xrightarrow{G} 0$ for all $i \neq j$. We use the fact that $G$ is a Gröbner basis for $N$ if an only if for all $f \in N$, there exist 
    $h_1, \ldots , h_s \in A_n$ such that 
    $f = h_1 g_1 + \cdots + h_s g_s$
    and LM$(f) = \max_{1 \le i \le t} \{ \text{LM}(h_i g_i) \}$. So let $f \in N$. From among all the representations of $f$ as a linear combination of the $g_i$, we can choose to write $f = \sum_{i=1}^s h_i g_i$, where  
\begin{align*}
    \delta = \max_{1 \le i \le s} \{LM(h_i) LM(g_i) \}
\end{align*}
is minimal, by the well-ordering property of our term order. If $\delta = LM(f)$, then $LM(f)$ is divisible by $LM(g_i)$ for some $i$, and the result follows.

It remains to consider the case when $LM(f) < \delta$. First we isolate the terms whose degree equals $\delta$. To that end, let $\mathcal{I} = \{i : LM(h_i)LM(g_i) = \delta \}$. For $i \in \mathcal{I}$, we write $h_i =c_i\delta+$ lower terms. Set $g = \sum_{i \in S} c_i \delta_i g_i$. Then, $LM(\delta_i g_i) = \delta$, for all $i \in \mathcal{I}$, but $LM(g) < \delta$. By Lemma \ref{combo}, we can write $g$ as a linear combination with coefficients $d_{ij} \in K$, that is,
\begin{align*}
    g 
    =
    \sum_{i,j \in \mathcal{I}, i\neq j}
    d_{ij} S(\delta_i g_i, \delta_j g_j).
\end{align*}
Since $\delta = lcm(LM(\delta_i, g_i), LM(\delta_j,g_j))$, then
\begin{align*}
    S(\delta_i g_i, \delta_j g_j)
    &=
     \frac{\delta}{LT(\delta_i g_i)}\delta_i g_i
    -
      \frac{\delta}{LT(\delta_j g_j)}\delta_j g_j  \\
    &=
     \frac{\delta}{LT(g_i)} g_i
    -
      \frac{\delta}{LT(g_j)} g_j = \frac{\delta}{\delta_{ij}}S(g_i, g_j),
\end{align*}
where $\delta_{ij} = lcm (LM(g_i), LM(g_j))$. By hypothesis, $S(g_i, g_j) \xrightarrow{G} 0 $, and thus $S(\delta_i g_i, \delta_j g_j) \xrightarrow{G} 0$. We can now write the $S$-polynomial as 
\begin{align*}
    S(\delta_i g_i, \delta_j g_j)
    =
    \sum_{\ell = 1}^{t} h_{ij\ell} g_\ell ,
\end{align*}
with the property that (by Theorem \ref{division})
\begin{align*}
\max_{1 \le \ell \le s} \{LM(h_{ij\ell} LM(g_\ell) \}
&=
LM (  S(\delta_i g_i, \delta_j g_j ) ) \\
&<
\max \{LM(\delta_i g_i), LM (\delta_j g_j) \}
=
\delta.
\end{align*}
Substituting these expressions into $g$ above allows us to write $f$ as a polynomial combination of the $g_i$, namely, $f = \sum_{i = 1}^{s} h'_{i} g_i$, where $\max_{1 \le i \le s} \{LM(h'_{i}) LM(g_i) \} < \delta$. This contradicts the minimality of $\delta$ and completes the proof.
\end{proof} 
The last theorem allows us to construct a Gröbner basis of an $A_n$-submodule $E$ starting with the usual Gröbner basis of $N$ (with respect to the monomial order $<$). We now give the generalization of Buchberger's algorithm to the Weyl algebra, whose correctness is justified in the following theorem and presented as Algorithm \ref{algo_2} below.
\begin{theorem}
Let $F = \{f_1, \ldots , f_s \}$ $(f_i \neq 0, \; 1 \le i \le s)$, be a subset of a free $A_n$-module $E$. Then Buchberger's algorithm (Algorithm \ref{algo_2}) produces a Gröbner basis for the $A_n$-submodule $N = \langle f_1, \ldots , f_s \rangle$ of $E$.
\end{theorem}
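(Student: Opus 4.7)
The plan is to establish three facts about the algorithm: (i) the set $G$ maintained during the algorithm always generates the same $A_n$-submodule $N$; (ii) when the algorithm halts, the resulting $G$ satisfies Buchberger's Criterion and is thus a Gröbner basis for $N$; and (iii) the algorithm actually halts after finitely many steps.

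For (i), I would argue by induction on the iterations. Initially $G = F$, which generates $N$ by definition. At each iteration, we select a pair $g_i, g_j \in G$, form $S(g_i, g_j)$, and reduce it modulo $G$ using the division procedure (Algorithm \ref{algo_1}) to obtain a remainder $r$. Since $S(g_i, g_j)$ is already an $A_n$-linear combination of $g_i$ and $g_j$, and each reduction step subtracts an $A_n$-multiple of some element of $G$, Theorem \ref{division} gives $r \in \langle G \rangle = N$. Thus appending $r$ to $G$ preserves the property $\langle G \rangle = N$.

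For (ii), the algorithm halts precisely when $S(g_i, g_j) \xrightarrow{G} 0$ for every pair $i \neq j$ of indices in the current $G$. This is exactly the hypothesis of Theorem \ref{buch} (Buchberger's Criterion), which then certifies that the final $G$ is a Gröbner basis for $N$.

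Part (iii) is the main obstacle, and requires a noetherian-type argument. The key observation is that whenever a nonzero remainder $r$ is appended to $G$, by definition $r$ is reduced with respect to the current $G$, so $LM(r)$ is not a multiple of $LM(g)$ for any $g$ already in $G$. Consequently, the sequence of leading monomials of the successively appended elements forms an antichain under divisibility in $\Theta e$. To close the argument, I would identify a monomial $x^\alpha \p^\beta e_i \in \Theta e$ with the triple $(\alpha, \beta, i) \in \nn^n \times \nn^n \times \{1, \ldots, m\}$, so that divisibility corresponds to componentwise comparison in the first $2n$ coordinates and equality in the last. By Dickson's lemma, every antichain in $\nn^{2n}$ under the product order is finite; partitioning our antichain according to the finitely many possible values $i \in \{1, \ldots, m\}$ yields only finitely many elements in $\Theta e$. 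Hence only finitely many new elements can be appended, and the algorithm must terminate. Combined with (i) and (ii), this shows that Buchberger's algorithm produces a Gröbner basis for $N$.
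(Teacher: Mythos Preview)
Your proof is correct in substance, and parts (i) and (ii) match the paper's argument. The interesting divergence is in the termination argument (iii). The paper proves termination by invoking the Noetherian property of $A_n$: if the algorithm did not halt, the successive sets $G_1 \subsetneq G_2 \subsetneq \cdots$ would yield a strictly ascending chain of leading-term submodules $\langle LT(G_1) \rangle \subsetneq \langle LT(G_2) \rangle \subsetneq \cdots$, contradicting the fact (cited from \cite{bio}) that $A_n$ is left and right Noetherian. Your approach is more direct and more elementary: you work with the leading monomials themselves and appeal to Dickson's lemma in $\nn^{2n}$, bypassing the nontrivial algebraic result that the Weyl algebra is Noetherian. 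This is a genuine simplification, and closer in spirit to how termination is usually argued in the commutative setting.

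One small imprecision worth flagging: the leading monomials of the successively appended elements do not literally form an antichain. What your argument establishes is that when $r$ is appended, $LM(r)$ is not a multiple of any earlier $LM(g)$; nothing prevents $LM(r)$ from \emph{dividing} some earlier leading monomial. But this one-sided condition is already enough. Dickson's lemma, in its well-quasi-order formulation, says that any infinite sequence in $\nn^{2n}$ contains indices $k < \ell$ with the $k$th term $\le_P$ the $\ell$th, i.e., the $\ell$th leading monomial would be a multiple of the $k$th---a contradiction. So your argument stands once the statement is adjusted from ``antichain'' to ``no term is a multiple of a predecessor.''
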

\begin{proof}
First we show that the algorithm terminates. Suppose, for contradiction, that the algorithm does not terminate. Then, we can construct a set $G_i$ strictly larger that $G_{i-1}$ and obtain a strictly increasing infinite sequence
\begin{align*}
    G_1 \subsetneq  G_2 \subsetneq   G_3 \subsetneq \cdots .
\end{align*}
Each $G_i$ is obtained from $G_{i-1}$ by adding some $h \in N$ to $G_{i-1}$, where $h$ is the nonzero reduction, with respect to $G_{i-1}$ of an $S$-polynomial of two elements of $G_{i-1}$. Since $h$ is reduced with respect to $G_{i-1}$, we have that $LT(h) \notin LT(G_{i-1})$. This gives
\begin{align*}
     LT(G_1) \subsetneq  LT(G_2) \subsetneq   LT(G_3) \subsetneq \cdots .
\end{align*}
This is a strictly ascending chain of $A_n$-submodules, contradicting the fact that $A_n$ is a left and right Noetherian ring {\cite[Proposition 2.8]{bio}}. Thus the algorithm terminates.

We have $F \subseteq G \subseteq N$, and hence $N = \langle f_1, \ldots , f_s \rangle \subseteq \langle g_1, \ldots , g_s \rangle \subseteq N$. Thus, $G$ is a generating set for the submodule $N$. Also, if $g_i , g_j$ are elements of $G$,  then $S(g_i, g_j) \xrightarrow{G} 0$ by construction. Therefore, $G$ is a Gröbner basis for the $A_n$-submodule $N$ by Theorem \ref{buch}.
\end{proof}
\medskip

\begin{algorithm}[H]
\label{algo_2}
   \SetAlgoNoLine
  \KwIn{$F = \{f_1, \ldots, f_s \} \subseteq E$ with $f_i \neq 0 \; (1 \le i \le s)$}
  \KwOut{$G = \{g_1, \cdots , g_s \}$,
  a Gröbner basis for $(f_1, \ldots, f_s)$}
  \textbf{Initialization:} 
  $G \coloneqq F$, $\tilde{G} \coloneqq \{ \{f_i, f_j \} : f_i \neq f_j \in G \}$ \\
  \While{$ \tilde{G} \neq \emptyset$ }{
   Choose any $\{f_i, f_j \} \in \tilde{G}$ \\
   $\tilde{G} \coloneqq \tilde{G} - \{ \{f_i, f_j \} \}$ \\
   $S(f_i, f_j) \xrightarrow{G} h$, where $h$ is reduced with respect to $G$ \\
    \If{ $h \neq 0$}{
    $\tilde{G} \coloneqq \tilde{G} \cup \{ \{u, h \} :$ for all $u \in G \}$ \\
    $G \coloneqq G \cup \{h \}$
    }
    }
    \caption{Buchberger's Algorithm for $D$-modules}
\end{algorithm}
\medskip

\begin{example}
\label{grob_ex}
Let $E$ be a free $A_2$-module with free generators $e_1$ and $e_2$. Consider the $A_n$-submodule $N$ of $E$ generated by
\begin{align*}
    g_1 &= x_1^2 \p_1^3 e_1 + \p_1^5 e_1, \\
    g_2 &= x_2^2 e_1 - x_1 e_2.
\end{align*}
To find a Gr\"{o}bner basis for $N$, we start by setting $G = \{g_1, g_2 \}$. Since $LM(g_1) = x_1^2 \p_1^3 e_1$ and $LM(g_2) = x_2^2 e_1$, then $lcm(LM(g_1), LM(g_2)) = x^2_1 x^2_2 \p^3_1 e_1$, and using Definition \ref{s_poly} we have
\begin{align*}
    S(g_1, g_2)= x^2_2 g_1 - x^2_1 \p^3_1 g_2 =
    x_2^2 \p_1^5 e_1 + x^3_1 \p_1^3 e_2 + 3x_1^2 \p_1^2 e_2 .
\end{align*}
Since this polynomial contains a multiple of $LM(g_2)$, it can be reduced module $g_2$ in one step by computing $h = S(g_1, g_2) - ( x_2^2 \p_1^5 e_1 / x_2^2 e_1 ) g_2$ as follows:
\begin{align*}
    S(g_1, g_2)= x_2^2 \p_1^5 e_1 + x^3_1 \p_1^3 e_2 + 3x_1^2 \p_1^2 e_2  \;
    \xrightarrow {g_2}  \;
    x_1^3\p_1^3 e_2 + 3x_1^2 \p_1^2 e_2 + x_1 \p_1^5 e_2 +5\p_1^4 e_2.
\end{align*}
It can be seen now that this last element is reduced with respect to $G$, so we set $g_3 = x_1^3\p_1^3 e_2 + 3x_1^2 \p_1^2 e_2 + x_1 \p_1^5 e_2 +5\p_1^4 e_2$, and add it to $G$. 
Next, we have $S(g_1, g_3) = S(g_2, g_3) =0$. Therefore, by Theorem \ref{buch},
$G =\{g_1, g_2, g_3\}$ is a Gr\"{o}bner basis of $N$.

\end{example}

\section{Berstein polynomials of \texorpdfstring{$D$}{}-modules and their invariants}
\label{sec:5}
 \setcounter{equation}{0}

This section provides a constructive proof of the existence of the Bernstein polynomial, which accounts for the dimension of a finitely generated $D$-module. This polynomial was introduced by I. Bernstein in \cite{bern_71} as an analog of the classical Hilbert polynomial (see \cite[Section~1.9]{eis}) for a finitely generated filtered module over a Weyl algebra. Among the many applications of this concept, it allowed Bernstein to prove Gelfand's conjecture about the  meromorphic extensions of the function the complex gamma function. The proof of the existence of the Bernstein polynomial we present here will apply the theory of Gr\"{o}bner bases we developed in the previous section, along with the properties of numerical polynomials stated in Section \ref{sec:2}.

The first result we present is the main step in computing the Bernstein polynomial.

\begin{theorem}
\label{bern_basis}
Let $M$ be a finitely generated $A_n$-module with system of generators
$\{f_1, \ldots, f_m \}$, 
and $E$ a free $A_n$-module with basis $e_1, \ldots, e_m$.
Let $\pi: E \rightarrow M$ be the natural $A_n$-epimorphism of $E$ onto $M$, defined by $\pi(e_i) = f_i$ for all $i= 1, \ldots, m$. Furthermore, let $N = Ker (\pi)$, and let $G = \{g_1, \ldots, g_s\}$ be a Gr\"{o}bner basis of $N$.
Finally, we treat $M$ as a filtered $A_n$-module with the filtration 
$M_r = \sum^m_{i =1} B_r f_i$, where $\{B_r \}_{r\in \nn}$ is the Bernstein filtration with $B_r = \{D \in A_n : deg(D) \le r \}$, and let 
$U_r$ denote the set $\{ w \in \Theta e : deg(w) \le r$, and $w$ is not a multiple of any $LM(g_j) \;(1 \le j \le s) \}$.
Then $\pi(U_r)$ is a basis of the $K$-vector space $M_r$. %and the number of elements in this basis is given by $\sum^n_{j =1} \omega (s)$, where $\omega$ is the Kolchin dimension polynomial of the set 
\end{theorem}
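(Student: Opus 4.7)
The plan is to prove the two defining properties of a basis separately: that $\pi(U_r)$ spans $M_r$ as a $K$-vector space, and that it is $K$-linearly independent. Both arguments rest on the fact that the monomial order on $\Theta$ (and therefore on $\Theta e$) is degree-compatible: since the degree of a monomial is the first entry in the lexicographic tuple defining the order, the leading monomial $LM(f)$ of a nonzero $f \in E$ has maximal degree among the monomials appearing in $f$, so $\deg(f) = \deg(LM(f))$.

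For spanning, I would start from an arbitrary $x \in M_r$ and write it as $x = \sum_{i=1}^{m} D_i f_i$ with $D_i \in B_r$, then lift it to $y = \sum_{i=1}^{m} D_i e_i \in E$, which has degree at most $r$. Now I apply the division algorithm (Theorem \ref{division}) to $y$ with respect to the Gröbner basis $G$, obtaining $y = q_1 g_1 + \cdots + q_s g_s + h$, with $h$ reduced with respect to $G$ and $LM(y) = \max\{LM(q_1 g_1), \ldots, LM(q_s g_s), LM(h)\}$. In particular $LM(h) \le LM(y)$, so by degree-compatibility $\deg(h) \le \deg(y) \le r$. Since each $g_j \in N = \ker(\pi)$, we get $\pi(h) = \pi(y) = x$. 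Because $h$ is reduced with respect to $G$, every monomial appearing in $h$ is of degree $\le r$ and is not a multiple of any $LM(g_j)$, hence lies in $U_r$. Thus $x$ lies in the $K$-span of $\pi(U_r)$.

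For linear independence, suppose we have a nontrivial relation $\sum_{w \in U_r} c_w \pi(w) = 0$ with $c_w \in K$ not all zero. Setting $y = \sum_{w \in U_r} c_w w \in E$, note that $y \ne 0$ because $U_r \subseteq \Theta e$ is part of the $K$-basis $\Theta e$ of $E$, and $\pi(y) = 0$, so $y \in N$. Since $G$ is a Gröbner basis of $N$, there exists some $g_j \in G$ with $LM(g_j) \mid LM(y)$. But $LM(y)$ is one of the monomials $w \in U_r$ with $c_w \ne 0$, and by the very definition of $U_r$ no such $w$ is a multiple of any $LM(g_j)$. This contradiction forces all $c_w = 0$, establishing linear independence.

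The argument is essentially an exercise in combining the division algorithm with the Gröbner basis characterization of leading monomials; the only subtle step is the degree control in the spanning part, and this is handled cleanly by the choice to put the total degree as the first coordinate in the lex-tuple defining the monomial order on $\Theta$, which guarantees $\deg(h) \le \deg(y)$ after reduction. No harder ingredient seems required beyond Theorem \ref{division} and the Gröbner basis property.
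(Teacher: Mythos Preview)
Your proof is correct and follows essentially the same approach as the paper. For the spanning part the paper carries out a direct well-founded induction on $\Theta e$ (rewriting each $\theta f_i$ with $\theta e_i \notin U_r$ as a $K$-combination of strictly smaller monomials using $\pi(g_j)=0$), whereas you invoke the division algorithm (Theorem~\ref{division}) to package the same reduction in one step; the linear-independence arguments are identical.
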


\begin{proof}
First we show that every element $\theta f_i$ ($1 \le i \le m, \; \theta \in \Theta$) not belonging to $\pi(U_r)$ can be written as a finite linear combination of elements of $\pi(U_r)$ with coefficients in $K$, so that the set $\pi(U_r)$ generates the $K$-vector space $M_r$. 
Since $\theta f_i \notin \pi(U_r)$, then $\theta e_i \notin U_r$, and thus $\theta e_i$ is a multiple of some $LM(g_j)$ ($1 \le j \le s$). Consider the element $g_j = a_j LM(g_j) + \cdots $ ($0 \neq a_j \in K$), where the dots stand for the sum of the other terms of $g_j$ whose monomials are less than $LM(g_j)$.
Since $g_j \in N$, we have  
$\pi(g_j) = a_j \pi(LM(g_j)) + \cdots  = 0$.
Since $\theta e_i$ is a multiple of $LM(g_j)$, that is, $\frac{\theta e_i}{LM(g_j)} = \theta'$, for some $\theta' \in \Theta$, we have 
$\pi(\theta' g_j) = a_j \pi(\theta' LM(g_j)) + \cdots = a_j \pi(\theta e_i) + \cdots  = a_j \theta f_i + \cdots = 0$. Thus, $\theta f_i$ is a finite $K$-linear combination of some elements $\tilde{\theta} f_k$ ($1 \le k \le m$) such that $\tilde{\theta} \in \Theta(r)$ and $\tilde{\theta} e_k < \theta e_i$. We can now apply induction on the well-ordered set $\Theta e$ to obtain that every element $\theta f_i$ ($\theta \in \Theta(r)$, $1 \le i \le m$) can be written as a finite linear combination of elements $\pi(U_r)$ with coefficients in the field $K$.

Next, we show that the set $\pi(U_r)$ is linearly independent over $K$. 
Suppose that $a_1 \pi(u_a) + \cdots + a_q \pi(u_q) = 0$ for some $u_1, \ldots , u_q \in U_r$ and $a_1, \ldots , a_q \in K$. Then $h = a_1 u_1+ \cdots +a_q u_q$ is an element of $N$. If an element $u = \theta e_j$ appears in $h$, that is, $u = u_i$ for some $i = 1, \ldots, q$, then $u$ is not a multiple of any $LM(g_i)$ (for  $1 \le i \le s$). Thus, $h$ is reduced with respect to $G$. By the first part of the proof of Theorem \ref{buch}, we have that $h =0$, and it follows that $a_1  = \cdots = a_q =0$. Therefore,  the set $\pi(U_r)$ is linearly independent over $K$, and this concludes the proof that $\pi(U_r)$ is a basis of $M_r$.
\end{proof}

The following theorem establishes the existence of the Bernstein polynomial and provides a formula for its computation. The constructive proof of existence below is the main result of this paper.

\begin{theorem}
\label{bern_exist}
Let $M$ be a finitely generated $A_n$-module with a system of generators $\{f_1, \ldots, f_m \}$, and let $\{M_r \}_{r \in \nn}$ be the corresponding filtration of $M$ with respect to the Bernstein filtration $\{B_r \}_{r\in \nn}$ $(M_r = \sum^p_{i =1} B_r f_i)$.
Furthermore,
Let $E$ be a free $A_n$-module with basis $e_1, \ldots, e_m$, let $N$ be the kernel of the natural epimorphism $\pi: E \rightarrow M$ $(e_i \mapsto f_i)$, and let $G =\{g_1, \ldots, g_s\}$ be a Gr\"{o}bner basis of $N$.
For any $i = 1, \ldots, m$, let $V_i = \{ (\alpha_1, \ldots, \alpha_n, \beta_1, \ldots, \beta_n) \in \nn^{2n}: x^{\alpha_1}\cdots x^{\alpha_n} \p^{\beta_1}\cdots \p^{\beta_n} e_i$ is not a multiple of any $LM(g_j)$ $(1 \le j \le s) \}$.
Then there exists a numerical polynomial $\chi (t)$ in one indeterminate $t$ such that:
\begin{enumerate}[{\em (i)}]
    \item 
    $\chi (r) = dim_K (M_r)$ for all sufficiently large $r \in \zz$ , that is, there exists $r_0 \in \zz$ such that the equality holds for all $r \ge r_0$.
    \item
    $deg(\chi (t)) \le 2n$, and the polynomial $\chi (t)$ can be represented as
\begin{align}
\label{bern_poly}
    \chi (t) =
                \sum^{2n}_{i =0} a_i \binom{t+i}{i}, 
\end{align}
where $a_i \in \zz$, and $i = 1, \ldots, 2n$.
    \item
\begin{align}
\label{compu_bern}
    \chi (t) = \sum_{i=1}^m \omega_i (t)
\end{align}
where $\omega_i(t) = \omega_{V_i}(t)$, and $\omega_{V_i}(t)$ is the Kolchin polynomial of the set $V_i \subseteq \nn^{2n}$ $(1 \le i \le n)$.
\end{enumerate}
\end{theorem}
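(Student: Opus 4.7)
The plan is to reduce the counting of $\dim_K(M_r)$ to counting lattice points in $\nn^{2n}$ and then to apply the Kolchin polynomial machinery of Section~\ref{sec:3} coordinatewise in the index $i$ of the generator $e_i$. The whole setup of Theorem~\ref{bern_basis} has been tailored precisely so that this reduction is immediate.

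First, I would appeal to Theorem~\ref{bern_basis}: since $\pi(U_r)$ is a $K$-basis of $M_r$, and in particular the monomials in $U_r$ map bijectively to the elements of this basis, one has $\dim_K(M_r) = |U_r|$ for every $r \in \nn$. Next I would partition $U_r$ according to which free generator appears: writing $U_r = \bigsqcup_{i=1}^{m} U_r^{(i)}$ with
\begin{equation*}
U_r^{(i)} = \{\theta e_i \in \Theta e : \deg(\theta)\le r,\ \theta e_i \text{ is not a multiple of any } LM(g_j)\}.
\end{equation*}
Identifying a monomial $x^{\alpha}\partial^{\beta} e_i$ with the $2n$-tuple $(\alpha_1,\dots,\alpha_n,\beta_1,\dots,\beta_n)\in\nn^{2n}$ turns $U_r^{(i)}$ into the set $V_i(r)$ of all vectors of $V_i$ whose coordinate sum is at most $r$. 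The key verification here is that $\theta e_i$ is a multiple of some $LM(g_j)$ if and only if the tuple of exponents of $\theta$ dominates (in the product order) the exponent tuple of some $LM(g_j)$ whose free-generator component is $e_i$; this is exactly the description of the complement of $V_i$ given by the Remark after Proposition~\ref{kolchin_dim}.

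Having made this translation, for each $i = 1,\dots,m$ Proposition~\ref{kolchin_dim} supplies a numerical polynomial $\omega_i(t) = \omega_{V_i}(t)$ of degree at most $2n$ such that $\omega_i(r) = |V_i(r)|$ for all sufficiently large $r \in \nn$. Summing over $i$, define
\begin{equation*}
\chi(t) := \sum_{i=1}^{m} \omega_i(t).
\end{equation*}
For all $r \ge r_0$, where $r_0$ is the maximum of the finitely many threshold values coming from the individual $\omega_i$, the identity
\begin{equation*}
\chi(r) = \sum_{i=1}^{m} |V_i(r)| = \sum_{i=1}^{m} |U_r^{(i)}| = |U_r| = \dim_K(M_r)
\end{equation*}
holds, establishing (i) and (iii). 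For (ii), $\chi(t)$ is a sum of numerical polynomials each of degree at most $2n$, hence itself a numerical polynomial of degree at most $2n$; Proposition~\ref{canon} then yields the representation $\chi(t) = \sum_{i=0}^{2n} a_i \binom{t+i}{i}$ with integer coefficients.

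There is no genuinely hard step: the real work was carried out in Theorem~\ref{bern_basis} (producing a monomial basis of $M_r$) and in Proposition~\ref{kolchin_dim} (producing the Kolchin polynomial of a subset of $\nn^{2n}$). The one place requiring care is the bijection between $U_r^{(i)}$ and $V_i(r)$, where I must verify that divisibility of a monomial $\theta e_i$ by some $LM(g_j)$ is faithfully encoded by the product order on the exponent tuples of $\theta$ together with the constraint that $LM(g_j)$ carry the free generator $e_i$; once this is checked, the three conclusions follow by direct assembly.
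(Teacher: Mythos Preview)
Your proposal is correct and follows essentially the same route as the paper: invoke Theorem~\ref{bern_basis} to get $\dim_K M_r = |U_r|$, partition $U_r$ by the free generator $e_i$, identify each piece with $V_i(r)$, apply Proposition~\ref{kolchin_dim} to obtain the Kolchin polynomials $\omega_i$, and sum. If anything, your write-up is slightly more explicit than the paper's (you spell out the threshold $r_0$ and invoke Proposition~\ref{canon} for the binomial representation in (ii)), but the architecture is identical.
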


\begin{proof}
With the notation of Theorem \ref{bern_basis},
the set $\pi(U_r)$ is a basis of the $K$-vector space $M_r$, for any $r \in \nn$. The second part of the proof of that theorem shows that the restriction of the map $\pi$ to $U_r$ is bijective because $Ker(\pi\vert_{U_r}) =0$. From this it follows that $dim_k M_r = Card~\pi(U_r) = Card~U_r$. 

Furthermore, for any $r \in \nn$,
\begin{align*}
    Card~U_r = \sum_{i=1}^m Card~V_i(r).
\end{align*}

By Proposition \ref{kolchin_dim}, for every $i= 1, \ldots , m$, there exists a numerical polynomial $\omega_i(t)$ 
in one indeterminate $t$, such that 
$\omega_i (r) =Card~V_i (r)$, for all sufficiently large $r \in \nn$. Hence, $Card~U_r = \sum_{i=1}^m \omega_i (r)$, for all sufficiently large $r$.
Proposition \ref{kolchin_dim} also shows that $deg( \sum_{i=1}^m \omega_i (t) ) \le 2n$, because each one of the terms in the sum has degree at most $2n$. 
Therefore, the polynomial $ \chi (t) = \sum_{i=1}^m \omega_i (t) $ satisfies all the conditions of the theorem.
\end{proof}

\begin{definition}
The polynomial $\chi(t)$, whose existence is established in Theorem \ref{bern_exist}, is called the \textbf{Bernstein polynomial} of the $A_n$-module associated with the system $\{f_1, \ldots, f_m \}$ of generators of $M$ (or with the filtration $\{M_r\}_{r \in \nn}$).
\end{definition}
\noindent
\textbf{Remark.} Formula (\ref{compu_bern}) reduces the computation of the Bernstein polynomial of an $A_n$-module to the computation of the Kolchin dimension polynomial of subsets of $\nn^m$.

%Examples of computing the Bernstein polynomial
\begin{example}
With the notation of Theorem \ref{bern_exist}, let $n =1$, and let an $A_1$-module $M$ be generated by a single element $f$ satisfying the defining equation $x^2 f + \p^2 f + x\p f = 0$. In other words, $M$ is a factor module of a free $A_1$-module $E = A_1 e$ with a free generator $e$ by its $A_1$-submodule $N = A_1 g$, where $g = x^2e + \p^2 e + x\p e$.  Clearly $\{g\}$ is a Gr\"{o}bner basis of $N$, from which we obtain the subset $\{(0,2)\}$ of $\nn^2$, corresponding to the multi-index of the leading monomial of $g$. Applying Proposition \ref{num_compu} and using the notation of Theorem \ref{bern_exist}, we obtain that $LM(g) = x^2$ and 
\begin{align*}
    \omega_{(2,0)}(t) =    2\binom{t+1}{1} - \binom{t}{0} 
    = 2t +1.
\end{align*}
Thus, the Bernstein polynomial of the module $M$ associated with the generator $f$ is $\chi (t) = \omega(t)_{(2,0)} = 2t +1$.
\end{example}

\begin{example}
Let $M$ be an $A_2$-module generated by the two elements $f_1$ and $f_2$ satisfying the defining equations
\begin{align*}
    (x^2_1 \p^3_1 + \p^5_1) f_1 =0, 
\quad \quad \text{and} \quad \quad
    x_2^2 f_1 - x_1 f_2 =0.
\end{align*}
Then $M$ is isomorphic to the factor module of a free $A_2$-module $E = A_2 e_1 + A_2 e_2$ with free generators $e_1$ and $e_2$ by its $A_2$-submodule $N$, where $N$ is as in Example \ref{grob_ex}. As shown in that example, the set
\begin{align*}
    G = \{
        g_1 &= x^2_1 \p^3_1 e_1 + \p^5_1 e_2,  \\
        g_2 &= x^2_2 e_1 - \p^5_1 e_2,  \\
        g_3 &= x_1^3\p_1^3 e_2 + 3x_1^2 \p_1^2 e_2 + x_1 \p_1^5 e_2 +5\p_1^4 e_2 \}
\end{align*}
is a Gr\"{o}bner basis of $N$. From this basis we get the subsets $V_1 =\{(2,0,3,0), (0,2,0,0)\}$ and $V_2 =\{(3,0,3,0)\}$ of $\nn^4$, corresponding to the multi-indices of the leading monomials of the elements of $G$ containing $e_1$ and $e_2$, respectively. For these subsets we can apply Proposition \ref{num_compu} to compute their Kolchin polynomials. Thus, using the notation of Theorem \ref{bern_exist}, we obtain
\begin{align*}
    &\omega_{V_1}(t) =
    \binom{t+4}{4} - \left[ \binom{t+4 -5}{4} + \binom{t+4 -2}{4}\right] + \binom{t+4 -7}{4}    \\
\text{and}  \\
   &\omega_{V_2}(t)    = \binom{t+4}{4} - \binom{t+4-6}{4}.
\end{align*}
Then the Bernstein polynomial of the module $M$ associated with the generator $f$ is given by $  \chi(t) = \omega_{V_1}(t) +  \omega_{V_2}(t)$. Therefore
\begin{align*}
   \chi(t) 
            = 6 \binom{t+3}{3} - 5\binom{t+2}{2} - 5 \binom{t+1}{1} + 15 \binom{t}{0}    
            = t^3 + \frac{7}{2} t^2 - \frac{3}{2} t  + 11.
\end{align*}
\end{example}

\begin{example}
Let $M$ be a $A_3$-module generated by one element $f$ satisfying the defining equations
\begin{align*}
(x_2 +1) f =0, \qquad x_1 f =0, \qquad \text{and} \qquad (\p_3 - 1) f =0.
\end{align*}
Then $M$ is isomorphic to the factor module of a free $A_3$-module $E = A_3 e$ with free generator $e$ by its $A_3$-submodule $N$ generated by
$g_1 = x_2 e + e, \,  
g_2 = x_1 e ,  \text{and} \,
g_3 = \p_3 e - e$.

As can be readily verified, $G = \{g_1, g_2, g_3 \}$ is a Gr\"{o}bner basis of $N$. The $S$-polynomials of $g_1$ and $g_2$ as well as $g_2$ and $g_3$ are given by $S(g_1, g_2) = S(g_2, g_3)= x_1 e $, which reduces to $0$ modulo $g_2$. The $S$-polynomials of $g_1$ and $g_3$ is given by $S(g_1, g_3) = x_2 e + \p_3 e $. It reduces modulo $g_1$ to $\p_3 e -e$, which in turn reduces to $0$ modulo $g_3$. Therefore, $G$ is indeed a Gr\"{o}bner basis of $N$. From this basis we get the subset $V = \{(0,1,0,0,0,0), (1,0,0,0,0,0), (0,0,0,0,0,1) \}$ of $\nn^6$, corresponding to the multi-indices of the leading monomials of the elements of $G$. Then applying Proposition \ref{num_compu} and using the notation of Theorem \ref{bern_exist}, we obtain
\begin{align*}
    \omega_V (t) &=
    \binom{t +6}{6} - 3\binom{t +6 -1}{6} + 3\binom{t +6 -2}{6} - \binom{t +6 -3}{6}  
    = \binom{t+3}{3}           \\
    &= \frac{1}{6}t^3 + t^2 +\frac{11}{6}t + 1.
\end{align*}

Therefore, the Bernstein dimension polynomial of the module $M$ associated with the generator $f$ is given by $\chi_M (t) = \omega_{V}(t)$.
\end{example}

The following result establishes the invariants of the Bernstein polynomial. 

\begin{theorem}
\label{invar}
Let $M$ be a finitely generated $A_n$-module and let
\begin{align*}
    \chi(t) =
                \sum^{2n}_{i =0} a_i \binom{t+i}{i}
\end{align*}
be the Bernstein polynomial associated with some finite system of generators $\{f_1, \ldots, f_m \}$ of $M$ (we write $\chi (t)$ in the form (\ref{int_canon}) with integer coefficients $a_i,~ 0 \le i \le 2n$).
Then the numbers $a_{2n}$, $d = deg(\chi (t))$ and  $a_d$ do not depend on the finite system of generators of the $A_n$-module $M$ with which this polynomial is associated. 
\end{theorem}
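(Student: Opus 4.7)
The plan is to compare the Bernstein polynomials $\chi(t)$ and $\chi'(t)$ arising from two finite systems of generators $\{f_1,\ldots,f_m\}$ and $\{f'_1,\ldots,f'_{m'}\}$ of $M$ by showing that the two induced filtrations $\{M_r\}$ and $\{M'_r\}$ are equivalent up to a constant shift. Since each set generates $M$, we may write $f_i = \sum_j D_{ij} f'_j$ and $f'_j = \sum_i E_{ji} f_i$ with $D_{ij}, E_{ji} \in A_n$, and set $p = \max_{i,j}\deg(D_{ij})$, $q = \max_{i,j}\deg(E_{ji})$. Using the multiplicativity $B_r B_s \subseteq B_{r+s}$ of the Bernstein filtration, for any $B \in B_r$ we have $B f_i = \sum_j (B D_{ij}) f'_j$ with $B D_{ij} \in B_{r+p}$, and symmetrically. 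Summing over $i$ (respectively $j$) yields the inclusions
\[
    M_r \subseteq M'_{r+p}, \qquad M'_r \subseteq M_{r+q} \qquad (r \in \nn).
\]

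Passing to $K$-dimensions and invoking Theorem \ref{bern_exist}, these inclusions give $\chi(r) \le \chi'(r+p)$ and $\chi'(r) \le \chi(r+q)$ for all sufficiently large $r$, hence the corresponding polynomial inequalities between $\chi(t)$ and $\chi'(t+p)$, and between $\chi'(t)$ and $\chi(t+q)$. Since substitution by a constant preserves both the degree and the leading monomial coefficient of a polynomial, the first inequality forces $\deg\chi \le \deg\chi'$ and the second gives the reverse, whence $d \coloneqq \deg\chi = \deg\chi'$. Comparing leading monomial coefficients in the same way yields their equality. Since $\binom{t+d}{d} = t^d/d! + O(t^{d-1})$, the coefficient $a_d$ in the canonical representation (\ref{bern_poly}) equals $d!$ times the leading monomial coefficient of $\chi(t)$, so $a_d$ is the same for both systems of generators.

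The invariance of $a_{2n}$ is then immediate from that of $d$ and $a_d$: if $d = 2n$ then $a_{2n} = a_d$, and if $d < 2n$ then $a_{2n} = 0$ in every canonical representation of the form (\ref{bern_poly}), the uniqueness of the coefficients being guaranteed by Proposition \ref{canon}. The main technical point of the argument is the multiplicativity $B_r B_s \subseteq B_{r+s}$ of the Bernstein filtration, which is slightly delicate in view of the non-commutativity of $A_n$; it reduces to the observation that the commutator $[\p_i, x_i] = 1$ lies in $B_0$, so rewriting a product $x^{\alpha_1}\p^{\beta_1} \cdot x^{\alpha_2}\p^{\beta_2}$ in the standard monomial form of Proposition \ref{basis} only introduces correction terms of strictly lower total degree.
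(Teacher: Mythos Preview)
Your proof is correct and follows essentially the same approach as the paper: both arguments establish mutual inclusions $M_r \subseteq M'_{r+c}$ and $M'_r \subseteq M_{r+c'}$ between the two filtrations, pass to $K$-dimensions to compare $\chi$ and $\chi'$ up to a shift, and read off equality of degree and leading coefficient. Your version is in fact slightly more explicit than the paper's, since you construct the shift constants $p,q$ concretely from the degrees of the transition operators $D_{ij}, E_{ji}$ (the paper simply asserts the existence of a common $r_0$), and you justify the filtration inequality $B_r B_s \subseteq B_{r+s}$ via the commutator relation, which the paper leaves implicit.
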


\begin{proof}
Let $\{g_1, \ldots, g_p \}$ be another finite system of generators of the $A_n$-module $M$, and let $\{M_r \}_{r \in \nn}$ and $\{M'_r \}_{r \in \nn}$ be the filtrations (with respect to the Bernstein filtration) associated with the systems of generators $\{f_1, \ldots, f_m \}$ and  $\{g_1, \ldots, g_p \}$, respectively. That is, for $r \in \nn$ we have
\begin{align*}
  M_r =  \sum^m_{i =1} B_r f_i,  
\quad \quad \text{and} \quad \quad
  M'_r =  \sum^p_{i =1} B_r g_i,
\end{align*}
where $\{B_r \}_{r \in \nn}$ is the Bernstein filtration with $B_r = \{D \in A_n : deg(D) \le r \}$.
Furthermore, let
\begin{align*}
    \chi^*(t) =
                \sum^{2n}_{i =1} b_i \binom{t+i}{i}
\end{align*}
be the Bernstein polynomial associated with the system $\{g_1, \ldots, g_p \}$.

Since $\bigcup_{r \in \nn} M_r = \bigcup_{r \in \nn} M'_r = M$ and all $M_r$ and $M'_r$ are finitely generated $K$-linear vector spaces, there exists an element $r_0 \in \nn$ such that 
$M_r \subseteq M'_{r+r_0}$ and
$M'_r \subseteq M_{r+r_0}$
for all $r \in \nn$.
It follows that
$\chi_M (r) \le   \chi^* (r+r_0)$ and $\chi^* (r) \le   \chi (r+r_0)$ for all sufficiently large $r \in \nn$. 
Therefore, $deg(\chi) = deg (\chi^{*}) = d$ and 
\begin{align*}
    a_d &= d! \lim_{r\to\infty} \frac{\chi  (r)}{r^d}  \\
     &\le d! \lim_{r\to\infty} \frac{\chi^{*}  (r +r_0)}{r^d}   \\
     &=  d! \lim_{r\to\infty} \frac{\chi^{*}  (r)}{r^d} 
     = b_d.
\end{align*}
Similarly, we have $b_d \le a_d$, and therefore $a_d = b_d$. Note that if $d < 2n$, then $a_{2n} =0$. This completes the proof that $d$ and $a_d$ are independent of the choice of generators of $M$.
\end{proof}

\textbf{Remark.} C. Dönch and A. Levin \cite[Theorem~5.9]{levin_13} considered bifiltrations of finitely generated $D$-modules and bivariate numerical polynomials associated with such filtrations. It turned out that these bivariate polynomials carry some invariants of the corresponding $D$-modules that are not carried by univariate Bernstein polynomials.

The degree of the polynomial $\chi (t)$ is called the \textbf{Bernstein dimension of $M$} and is denoted by $d(M)$. Since  $\chi (t)$ is a numerical polynomial, the number $d! a_d$ (where $d= deg(\chi(t))$ and $a_d$ is the coefficient of $\binom{t+d}{d}$ in (\ref{bern_poly})) is a positive integer called the \textbf{multiplicity of $M$} and is denoted by $m(M)$. 

Theorem \ref{invar}, in the next section, shows that both the dimension and multiplicity are invariants of the polynomial $\chi(t)$, that is, they do not depend on the choice of the system of generators $\{f_1, \ldots, f_m \}$ of $M$ with which the polynomial $\chi(t)$ is associated.

\section{Generalized Krull Dimension of \texorpdfstring{$D$}{}-modules}
\label{sec:6}
 \setcounter{equation}{0}
 
As an application of the properties of the Bernstein polynomial, we now turn to a generalization of the concept of the Krull dimension of a commutative ring to the case of modules over the Weyl algebra $A_n$. Generalizations for the case of differential and difference modules similar to the one presented here are developed with corresponding results in \cite{john} and \cite[Section~3.6]{levin_08}, respectively. We first state some definitions. 

Let $A_n$ be a Weyl algebra, $M$ a finitely generated  $A_n$-module, and $U$ a family of $A_n$-submodules of $M$. 
Furthermore, let $\mathscr{B}_U$ denote the set of all pairs $(N,N') \in U \times U$ such that $N \supseteq N'$, and let $\Bar{\zz}$ be the augmented set of integers, that is, the set obtained by adjoining a new symbol, $\infty$, to the set $\zz$ ($\Bar{\zz}$ is considered as a totally ordered set whose order $>$ is the extension of the natural order of $\zz$ such that $a < \infty$ for any $a \in \zz$). With this notation and conventions, we have the following proposition. 

\begin{proposition}
There exists a unique mapping 
$\mu_U :
\mathscr{B}_U  \rightarrow  \Bar{\zz}$,
with the following properties:
\begin{enumerate} [{\em (i)}]
    \item $\mu_U (N,N') \geq -1$ for every pair $(N,N') \in \mathscr{B}_U$;
    \item if $d \in \nn$, then  $\mu_U (N,N') \geq d$ if and only if $N \neq N'$ and there exists and infinite chain
    $N \supseteq N_0  \supseteq N_1 \supseteq \cdots  \supseteq N'$
    such that  $\mu_U (N_{i-1},N_{i}) \geq d-1$, for all $i = 1, 2 \ldots $.
\end{enumerate}
\end{proposition}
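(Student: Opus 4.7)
The plan is to construct $\mu_U$ through its sublevel sets, thereby turning the self-referential condition in (ii) into a genuine inductive definition. Define, by recursion on $d \in \{-1, 0, 1, 2, \ldots\}$, a family of subsets $S_d \subseteq \mathscr{B}_U$ as follows. Set $S_{-1} = \mathscr{B}_U$. For $d \geq 0$, let $S_d$ consist of those pairs $(N, N') \in \mathscr{B}_U$ such that $N \neq N'$ and there exists an infinite chain $N \supseteq N_0 \supseteq N_1 \supseteq \cdots \supseteq N'$ in $U$ with $(N_{i-1}, N_i) \in S_{d-1}$ for every $i \geq 1$. A straightforward induction on $d$, reusing the same witnessing chain, shows the monotonicity $S_{d+1} \subseteq S_d$ for all $d \geq -1$: if $(N,N') \in S_{d+1}$ and the witnessing chain has each link in $S_d$, the inductive hypothesis $S_d \subseteq S_{d-1}$ shows the same chain certifies $(N,N') \in S_d$.

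With the $S_d$ in hand, I would define
\[
\mu_U(N, N') \;=\; \sup\{\,d \geq -1 : (N, N') \in S_d\,\},
\]
with the convention that the supremum is $\infty$ when $(N, N') \in S_d$ for every $d \in \nn$. Property (i) is immediate since $(N, N') \in S_{-1}$ for every pair in $\mathscr{B}_U$. Property (ii) unpacks directly from the definition: for $d \in \nn$, the inequality $\mu_U(N, N') \geq d$ is equivalent to $(N, N') \in S_d$, which by construction is equivalent to the existence of an infinite chain $N \supseteq N_0 \supseteq N_1 \supseteq \cdots \supseteq N'$ with $(N_{i-1}, N_i) \in S_{d-1}$, i.e.\ with $\mu_U(N_{i-1}, N_i) \geq d-1$, for all $i \geq 1$.

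For uniqueness, suppose $\mu^{\sharp}$ is any other mapping on $\mathscr{B}_U$ satisfying (i) and (ii), and set $S^{\sharp}_d = \{(N, N') \in \mathscr{B}_U : \mu^{\sharp}(N, N') \geq d\}$ for $d \geq -1$. Property (i) yields $S^{\sharp}_{-1} = \mathscr{B}_U = S_{-1}$, while property (ii) supplies exactly the recursive characterization that was used to define $S_d$ from $S_{d-1}$. Induction on $d$ then gives $S^{\sharp}_d = S_d$ for all $d \geq -1$, and hence $\mu^{\sharp}(N, N') = \sup\{d : (N, N') \in S^{\sharp}_d\} = \mu_U(N, N')$, proving uniqueness.

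I expect the only real obstacle to be precisely the self-referential flavor of condition (ii): the value $\mu_U(N, N')$ is characterized in terms of $\mu_U$ on pairs appearing in a chain between $N$ and $N'$. The sublevel-set device sidesteps this by replacing the biconditional with an honest recursion on $d$, and the monotonicity $S_{d+1} \subseteq S_d$ is what guarantees that the supremum defining $\mu_U$ behaves correctly and coherently with respect to the recursive clause.
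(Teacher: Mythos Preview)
Your proof is correct and follows the same strategy as the paper---define $\mu_U(N,N')$ as the largest $d$ for which the recursive clause holds---but your execution via the explicit sublevel sets $S_d$ is considerably more rigorous. The paper simply sets $\mu_U(N,N')=-1$ when $N=N'$, and otherwise declares $\mu_U(N,N')$ to be the greatest $d$ for which ``condition (ii) holds'' (or $\infty$ if this occurs for all $d$), asserting well-definedness and uniqueness without addressing the self-reference that you correctly identify and resolve.
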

\begin{proof}
We define the desired mapping $\mu_U$ as follows.
If $N \in U$, we set $\mu_U (N,N) = -1$. If $(N,N') \in \mathscr{B}_U$, $N \neq N'$, and condition (ii) holds for all $d \in \nn$, we set $\mu_U (N,N') = \infty$. Finally, if a pair $(N,N') \in \mathscr{B}_U$ satisfies condition (ii) for some $d \in \nn$, we define $\mu_U(N,N')$ as the greatest of such integers $d$. It is easy to see that the mapping $\mu$ is well-defined, satisfies both conditions (i) and (ii), and is uniquely determined by these conditions.
\end{proof}

With the notation above we state the following definitions.

\begin{definition}
The least upper bound of the set 
$\{ \mu_U (N,N') : (N,N')  \in \mathscr{B}_U \}$ 
is called the \textbf{type} of the $A_n(K)$-module $M$ over the family of its $A_n$-submodules $U$, and is denoted by $type_U (M)$.
\end{definition}
\begin{definition}
Let $A_n$ be a Weyl algebra, $M$ a finitely generated $A_n$-module, and $U$ a family of $A_n$-submodules of $M$. 
Then the least upper bound of the lengths of all chains
$N_0 \supsetneq N_1 \supsetneq \cdots \supsetneq N_p$,
where $N_i \in U$ and $\mu_U (N_{i-1},N_{i}) = type_U M \; (i= 1, \ldots , p$ and the number $p$ is considered as the length of the chain$)$, is called the \textbf{dimension} of the $A_n$-module $M$ over the family $U$.
\end{definition}
The dimension of an $A_n$-module $M$ over a family of its $A_n$-submodules $U$ is denoted by $dim_U M$.

Let $M$ be a $A_n$-module and let $U$ be a family of $A_n$-submodules of $M$. It is easy to see that if $type_U M < \infty$, then $dim_U M \ge 1$. At the same time, if $type_U M = \infty$, then $dim_U M$ may equal zero. % as the following example demonstrates.  

%\begin{example}
%Let $K$ be a field and $V_1, V_2, \ldots, V_n, \ldots$ finitely generated $K$-vector spaces such that $dim V_1 =1,~ dim V_2 = 2, \ldots, dim V_n = n, \ldots$.
%Let $V = \bigoplus^\infty_{n =1} V_n$ and let $U$ be the family of all subpaces $W$ of $V$ such that $\pi_n(W) \neq 0$, for only one number $n$ ($\pi_n$ denote the projection of $V$ onto its $n$-th component).
%Let $W_i$ denote the vector subpace of $V$ whose is $i$-th projection is nonzero.  Then we consider the following infinite strictly descending chain of vector $K$-spaces
%
%\begin{align*}
  %  W = W_0 \supsetneqq W_1 \supsetneqq  W_2 \supsetneqq \cdots \supsetneqq
   % W'.
%\end{align*}
%
%Then $type_U (M) =\infty$, and $dim_U = 0$.
%\end{example}
%
%

In what follows we use the following notation, where $m(M)$ denotes the multiplicity of $M$, defined in Section 4:
\begin{align*}
    \delta (M) =
\begin{cases}
0 \quad  &\text{if}  \quad d < 2n ,   \\
m(M) \quad  &\text{if}  \quad d  = 2n .
\end{cases}
\end{align*}

The following proposition, proved in \cite[Chapter~7,~\S~5]{cou_95}, shows that a good filtration $\Gamma$ of an $A_n$-module $M$ induces good filtrations on $A_n$-submodules and factor modules of $M$. These filtrations are called the filtrations \textit{induced} by $\Gamma$.

\begin{proposition}
\label{indu_fil}
Let $M$ be an $A_n$-module with the filtration $\Gamma = \{M_r\}_{r \in \nn}$ with respect to the Bernstein filtration $\{B_r \}_{r \in \nn}$, and let $N$ be a $A_n$-submodule of $M$. Then $\{N \cap M_r \}_{r \in \nn}$ and $\{M_r /(M_r \cap N) \}_{r \in \nn}$ are good filtrations for $N$ and $M /N$, respectively.
\end{proposition}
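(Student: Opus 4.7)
The plan is to verify that the induced families satisfy the four defining conditions of a good filtration with respect to the Bernstein filtration $\{B_r\}$: each term is a finite-dimensional $K$-vector space, one has $B_r M_s' \subseteq M_{r+s}'$ (where $M_r'$ stands for the putative filtration), the union exhausts the module, and there exists an index $r_0$ such that $M_{r+r_0}' = B_r M_{r_0}'$ for every $r \in \nn$. The first three conditions are immediate for both constructions: $N \cap M_r$ is a subspace of the finite-dimensional space $M_r$; because $N$ is an $A_n$-submodule, $B_r N \subseteq N$, and together with $B_r M_s \subseteq M_{r+s}$ this gives $B_r(N \cap M_s) \subseteq N \cap M_{r+s}$; and $N = \bigcup_r (N \cap M_r)$ follows at once from $M = \bigcup_r M_r$. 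The checks for the quotient filtration are analogous via the natural projection $\pi : M \to M/N$.

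For the quotient $M/N$ the goodness condition is essentially free. If $r_0$ witnesses the goodness of $\{M_r\}$, so that $B_r M_{r_0} = M_{r+r_0}$ for every $r \in \nn$, then applying $\pi$ gives $B_r\,\pi(M_{r_0}) = \pi(M_{r+r_0})$, and since $\pi(M_r) = (M_r + N)/N \cong M_r/(M_r \cap N)$ this is precisely the goodness requirement for the induced filtration on $M/N$.

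The genuine work lies in showing that $\{N \cap M_r\}_{r \in \nn}$ is good. The natural strategy is to pass to the Rees construction. Form the graded ring $\widetilde{A}_n = \bigoplus_{r \in \nn} B_r$ and the graded module $\widetilde{M} = \bigoplus_{r \in \nn} M_r$ over it. Goodness of $\{M_r\}$ is equivalent to $\widetilde{M}$ being finitely generated over $\widetilde{A}_n$. The graded submodule $\widetilde{N} = \bigoplus_{r \in \nn}(N \cap M_r)$ is therefore finitely generated, provided $\widetilde{A}_n$ is (left) Noetherian; expanding a finite homogeneous generating set of $\widetilde{N}$ yields an $s_0 \in \nn$ such that $B_r(N \cap M_{s_0}) = N \cap M_{r+s_0}$ for all $r \in \nn$, which is the goodness of the induced filtration on $N$.

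The main obstacle, and the only non-formal step, will be establishing the Noetherianity of the Rees ring $\widetilde{A}_n$. The cleanest route is through the associated graded ring $\mathrm{gr}(A_n) = \bigoplus_r B_r/B_{r-1}$: the relation $\p_i x_i = x_i \p_i + 1$ shows that the commutator $[\p_i, x_i]$ lies in lower filtration degree, so the symbols of $x_1,\ldots,x_n,\p_1,\ldots,\p_n$ commute in $\mathrm{gr}(A_n)$, identifying it with the commutative polynomial ring $K[\xi_1,\ldots,\xi_n,\eta_1,\ldots,\eta_n]$. This ring is Noetherian by Hilbert's basis theorem, and the usual lifting argument (a strictly ascending chain of graded left ideals of $\widetilde{A}_n$ would induce one in $\mathrm{gr}(A_n)$) transfers Noetherianity upstairs. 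The details of this transfer are carried out in \cite[Chapter~7,~\S~5]{cou_95}.
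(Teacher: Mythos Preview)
The paper does not actually prove this proposition; it only states it and cites \cite[Chapter~7,~\S~5]{cou_95}. Your sketch supplies exactly the standard argument one finds there (pass to the associated graded or Rees construction, use that $\mathrm{gr}(A_n)$ is a commutative polynomial ring and hence Noetherian, deduce finite generation of the induced graded object), and you cite the same source for the technical details, so your proposal is fully consistent with---indeed more informative than---what the paper provides.

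One minor imprecision worth flagging: the parenthetical ``a strictly ascending chain of graded left ideals of $\widetilde{A}_n$ would induce one in $\mathrm{gr}(A_n)$'' is not literally the mechanism. The quotient map $\widetilde{A}_n \to \mathrm{gr}(A_n) = \widetilde{A}_n/(t)$ (with $t$ the central degree-one element) can collapse strict inclusions. The actual transfer uses either that $t$ is central and regular together with Noetherianity of both $\widetilde{A}_n/(t)$ and $\widetilde{A}_n[t^{-1}] \cong A_n[t,t^{-1}]$, or---more in the spirit of Coutinho's exposition---bypasses the Rees ring entirely and works with the graded submodule $\bigoplus_r (N\cap M_r + M_{r-1})/M_{r-1}$ of $\mathrm{gr}(M)$, which is finitely generated over the Noetherian ring $\mathrm{gr}(A_n)$. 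Since you explicitly defer this step to the reference, the outline is sound as written.
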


We will use the following lemma and proposition to arrive at the central result of this section, Theorem \ref{Krull-type}, about the generalized Krull dimension and type of $D$-modules. First, we recall a definition from homological algebra \cite{atiyah_mac}.

Let $R$ be a ring. A sequence of $R$-modules and $R$-homomorphisms
\begin{align*}
    \cdots \longrightarrow M_{i-1} \xrightarrow{\; \phi_i \;} M_i \xrightarrow{\phi_{i+1}} M_{i+1} \longrightarrow \cdots
\end{align*}
is said to be \textit{exact} at $M_i$ if $Im (\phi_i) = Ker (\phi_{i+1})$. The sequence is \textit{exact} if it is exact at each $M_i$.

\begin{lemma}
\label{exact_dim}
Let $K$ be a field, and let $0 \rightarrow N  \xrightarrow{i} M  \xrightarrow{j} P  \rightarrow 0 $ be a an exact sequence of finitely generated $A_n$-modules. Then $\delta(N) + \delta(P) = \delta (M)$. 
\end{lemma}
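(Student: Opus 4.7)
The plan is to use the additivity of vector-space dimension along a short exact sequence, applied level by level to the induced filtrations, in order to get a polynomial identity $\chi_N(t)+\chi_P(t)=\chi_M(t)$, and then to read off what this implies about $\delta$ by case analysis on the Bernstein dimensions.

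First I would fix a finite system of generators $\{f_1,\ldots,f_m\}$ of $M$ and form the associated good filtration $\{M_r\}_{r\in\nn}$ of $M$ with respect to the Bernstein filtration, so that by Theorem \ref{bern_exist} the Bernstein polynomial $\chi_M(t)$ of $M$ satisfies $\chi_M(r)=\dim_K M_r$ for all sufficiently large $r$. Identifying $N$ with $i(N)\subseteq M$ and $P$ with $M/i(N)$ via the isomorphism induced by $j$, Proposition \ref{indu_fil} ensures that $\{N\cap M_r\}_{r\in\nn}$ and $\{M_r/(N\cap M_r)\}_{r\in\nn}$ are good filtrations of $N$ and $P$ respectively, so that each carries a Bernstein polynomial $\chi_N(t)$ and $\chi_P(t)$ whose degree and leading coefficient, by Theorem \ref{invar}, equal $d(N),\delta(N)$ and $d(P),\delta(P)$ (and which coincide with any other choice of generators/good filtration).

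Next, for every $r\in\nn$ the exactness of the ambient sequence yields a short exact sequence of finite-dimensional $K$-vector spaces
\begin{align*}
0 \longrightarrow N\cap M_r \longrightarrow M_r \longrightarrow M_r/(N\cap M_r) \longrightarrow 0,
\end{align*}
so that $\dim_K(N\cap M_r)+\dim_K\bigl(M_r/(N\cap M_r)\bigr)=\dim_K M_r$. Evaluating at sufficiently large $r$ turns this numerical identity into the polynomial identity $\chi_N(t)+\chi_P(t)=\chi_M(t)$. Because $\chi_M(r),\chi_N(r),\chi_P(r)$ are nonnegative integers for all large $r$, the leading coefficients of $\chi_N$ and $\chi_P$ are nonnegative, so no cancellation of top-degree terms can occur; consequently $d(M)=\max\{d(N),d(P)\}\le 2n$, and the leading coefficient of $\chi_M$ at degree $d(M)$ is the sum of those of $\chi_N,\chi_P$ at that same degree (with the convention that a polynomial of strictly smaller degree contributes $0$).

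Finally, I would split into cases using the definition of $\delta$. If $d(M)<2n$, then $d(N),d(P)<2n$, so all three $\delta$'s vanish and the identity is trivial. If $d(M)=2n$, then at most by the preceding paragraph, $\delta(M)=m(M)$ equals the sum of the degree-$2n$ leading coefficients of $\chi_N$ and $\chi_P$, which is precisely $\delta(N)+\delta(P)$ under the convention above. There is no genuinely hard step here; the main thing to be careful about is the invariance of $\delta$ under change of good filtration (guaranteed by Theorem \ref{invar}, so that the polynomial identity obtained for the induced filtrations actually computes $\delta(N)$ and $\delta(P)$), together with the positivity argument that forbids loss of degree when adding $\chi_N$ and $\chi_P$.
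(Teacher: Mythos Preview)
Your proof is correct and follows essentially the same route as the paper's: pass to the induced good filtrations on $N$ and $P$, use exactness at each level to obtain $\chi_N(t)+\chi_P(t)=\chi_M(t)$, and then extract the degree-$2n$ coefficient. The only cosmetic difference is that the paper reads off $\delta$ by applying the linear operator $\Delta^{2n}$ to both sides (which picks out $a_{2n}$ directly), whereas you do a short case analysis on $d(M)$ together with a positivity argument; both arrive at the same conclusion.
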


\begin{proof}
Let $\{M_r\}_{r \in \nn}$ be a good filtration of the $A_n$-modules $M$ and set $N_r = i^{-1} (i(N) \cap M_r)$ and $P_r = j(M_r)$ for any $r \in \nn$. Clearly, the filtration $\{P_r \}_{r \in \nn}$ of $P$ is good, and by Proposition \ref{indu_fil} the filtration $\{N_r\}_{r \in \nn}$ of $N$ is also good. Let $\chi_N (t)$, $\chi_M (t)$ and $\chi_P (t)$ be the Bernstein dimension polynomials of $N$, $M$ and $P$, respectively, associated with these filtrations. Since for all sufficiently large $r \in \nn$ the sequence 
\begin{align*}
    0 \rightarrow N_r \rightarrow M_r \rightarrow j(M_r) \rightarrow 0
\end{align*}
is exact, it follows that $dim_K N_r + dim_K \; j(M_r) = dim_K M_r$, and so $\chi_N (t) + \chi_P (t) = \chi_M (t)$. Therefore, by the last statement in Proposition \ref{canon}, we have
\begin{align*}
    \delta(M) &= \Delta^{2n} (\chi_M (t)) = \Delta^{2n} \big(\chi_N (t) + \chi_P (t) \big)  \\
    &= \Delta^{2n} (\chi_N (t) ) + \Delta^{2n} (\chi_P (t) )
    = \delta(N) + \delta (P),
\end{align*}
thus establishing the result.
\end{proof}

\begin{proposition}
\label{min_indep}
Let $M$ be a finitely generated $A_n$-module. Then the number $\delta(M)$ is equal to the maximal number of elements of the module $M$ that are linearly independent over $A_n$.
\end{proposition}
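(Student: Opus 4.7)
The plan is to prove both inequalities $\delta(M)\ge\rho(M)$ and $\delta(M)\le\rho(M)$, where $\rho(M)$ denotes the maximal number of elements of $M$ linearly independent over $A_n$. A first observation is that $\delta(A_n)=1$: viewed as a module over itself generated by $1$, the module $A_n$ has filtration $(A_n)_r=B_r$, so its Bernstein polynomial is $\chi_{A_n}(t)=\binom{t+2n}{2n}$, which has degree $2n$ and leading coefficient $1$ in the binomial basis. Applying Lemma~\ref{exact_dim} inductively to the split exact sequences $0\to A_n\to A_n^p\to A_n^{p-1}\to 0$ yields $\delta(A_n^p)=p$ for every $p\ge 0$.

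For the inequality $\delta(M)\ge\rho(M)$, suppose $f_1,\dots,f_p\in M$ are linearly independent over $A_n$. Then the $A_n$-homomorphism $A_n^p\to M$ sending $e_i\mapsto f_i$ is injective, and Lemma~\ref{exact_dim} applied to the short exact sequence $0\to A_n^p\to M\to M/(\sum_i A_n f_i)\to 0$ gives $\delta(M)=\delta(A_n^p)+\delta(M/(\sum_i A_n f_i))\ge p$, since $\delta$ is nonnegative. Taking the supremum over such $p$ yields $\delta(M)\ge\rho(M)$.

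For the reverse inequality I would induct on $q=\delta(M)$. The base case $q=0$ is trivial, so assume $q\ge 1$. The crux is the sub-claim: \emph{if $\delta(M)>0$, then some $f\in M$ has trivial annihilator $\operatorname{Ann}_{A_n}(f)=\{0\}$.} Granting this, $A_n f\cong A_n$ has $\delta(A_n f)=1$, and Lemma~\ref{exact_dim} applied to $0\to A_n f\to M\to M/A_n f\to 0$ gives $\delta(M/A_n f)=q-1$. By the induction hypothesis there exist $q-1$ elements $\bar g_1,\dots,\bar g_{q-1}\in M/A_n f$ linearly independent over $A_n$; choosing lifts $g_1,\dots,g_{q-1}\in M$, the tuple $(f,g_1,\dots,g_{q-1})$ is linearly independent in $M$, since any relation $Df+\sum_i D_i g_i=0$ reduces modulo $A_n f$ to force all $D_i=0$, whereupon $Df=0$ together with $\operatorname{Ann}(f)=\{0\}$ forces $D=0$.

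The main obstacle is the sub-claim, which I would establish by contradiction: assume every $f\in M$ has $\operatorname{Ann}(f)\ne 0$, write $M=\sum_{i=1}^m A_n f_i$, and set $N_j=\sum_{i\le j} A_n f_i$. Each quotient $N_j/N_{j-1}$ is a homomorphic image of $A_n f_j\cong A_n/\operatorname{Ann}(f_j)$, so it suffices to show $\delta(A_n/I)=0$ for every nonzero left ideal $I\subseteq A_n$. For this, pick $0\ne D\in I$ with Bernstein degree $k$; since $A_n$ is a Noetherian domain, left multiplication by $D$ sends $B_{r-k}$ injectively into $B_r\cap I$, so $\dim_K(A_n/I)_r\le\binom{r+2n}{2n}-\binom{r-k+2n}{2n}$, a polynomial of degree $2n-1$ in $r$. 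Hence $d(A_n/I)<2n$ and $\delta(A_n/I)=0$. Iterating Lemma~\ref{exact_dim} along $0=N_0\subseteq N_1\subseteq\cdots\subseteq N_m=M$ then forces $\delta(M)=\sum_j\delta(N_j/N_{j-1})=0$, contradicting $\delta(M)=q>0$ and completing the proof.
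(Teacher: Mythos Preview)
Your proof is correct, and the core ingredient---that $\delta(A_n/I)=0$ for every nonzero left ideal $I$---is the same as in the paper, but the two arguments are organized differently and prove that ingredient by different means.

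The paper first establishes the equivalence ``$\delta(M)=0$ if and only if every element of $M$ is linearly dependent over $A_n$.'' For the harder implication it uses additivity: writing $L_i=\operatorname{Ann}(\xi_i)\neq 0$, the exact sequence $0\to L_i\to A_n\to A_n\xi_i\to 0$ gives $\delta(L_i)+\delta(A_n\xi_i)=1$, and since $L_i$ contains a torsion-free element (any nonzero element of $A_n$), the already-proved forward direction forces $\delta(L_i)\ge 1$, hence $\delta(A_n\xi_i)=0$. With the equivalence in hand, the paper takes a \emph{maximal} linearly independent set $\{z_1,\dots,z_p\}$ in one shot, computes $\delta(F)=p$ for $F=\sum A_n z_i$ directly from the Bernstein polynomial, notes $\delta(M/F)=0$, and concludes $\delta(M)=p$.

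Your route differs in two ways. First, you obtain $\delta(A_n/I)=0$ by an explicit dimension bound (the injection $B_{r-k}\hookrightarrow B_r\cap I$ via $a\mapsto aD$), which is more elementary and avoids the mild self-reference in the paper's argument. Second, instead of grabbing a maximal independent set at once, you induct on $\delta(M)$, peeling off one torsion-free element and descending to the quotient. Both approaches rest on Lemma~\ref{exact_dim}; the paper's endgame is slightly slicker (no induction, no lifting argument), while your treatment of the key vanishing step is more direct. One small wording issue: what you call ``left multiplication by $D$'' is really the map $a\mapsto aD$ (so that the image lands in the \emph{left} ideal $I$); the mathematics is fine, but the phrase usually denotes $a\mapsto Da$.
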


\begin{proof}
First, we will show that $\delta (M) =0$ if and only if every element of $M$ is linearly dependent over $A_n$. Suppose that $\delta(M)= 0$ and, for the sake of contradiction, that there exists $z \in M$ that is linearly independent over $A_n$. Then the map $\phi : A_n \rightarrow M$, defined by $\phi(D)= Dz$ for every $D \in A_n$, is a monomorphism of $A_n$-modules. Applying Lemma \ref{exact_dim} to the exact sequence of finitely generated $A_n$-modules $0 \rightarrow A_n \xrightarrow{\phi} M \rightarrow M/ \phi(A_n) \rightarrow 0$, we have
$\delta(A_n) = \delta(M) - \delta(M/ \phi(A_n)) \le \delta(M) = 0$. But $\delta(A_n) = m(A_n) =1$, because $d(A_n) =2n$. This contradiction proves that any element of $M$ is linearly dependent over $A_n$. 

Conversely, suppose that every element of $M$ is linearly dependent over $A_n$. Let $\xi_1, \ldots, \xi_k$ be generators of the $A_n$-modules $M$ (so that $M = \sum^k_{i =0}A_n \xi_i)$. Let $N_i = A_n \xi_i~ (1 \le i \le k)$, and $L_i$ denote the kernel of the mapping $\psi_i: A_n \rightarrow N_i~(1 \le i \le k)$ such that $\psi_i(u) = u \xi_i$ for any $u \in A_n$. By assumption, $\xi_i$ is linearly dependent over $A_n$, and therefore, $L_i \neq 0~(1 \le i \le k)$. We have $\delta(L_i) \neq 0$ (because $\delta(L_i) = 0$ implies that all elements of $A_n$ are linearly dependent, giving $\xi_i u \neq 0~(0 \neq u \in L_i)$, and $A_n$ would have zero divisors, which is not the case). Applying Lemma \ref{exact_dim} to the exact sequence of finitely generated $A_n$-modules $ 0 \rightarrow L_i \rightarrow A_n \xrightarrow{\psi_i} N_i \rightarrow 0$, we obtain that $\delta(L_i) + \delta(N_i)= \delta(A_n) =1$. Since each nonzero element of $L_i$ is linearly independent over $A_n$, the above reasoning shows that $\delta(L_i) \neq 0$, and hence $\delta(L_i) \ge 1~ (1 \le i \le k)$. Therefore, $0 \le \delta(N_i) = \delta(A_n) - \delta(L_i) = 1 - \delta(L_i) \le 0$, and so $\delta(N_i) = 0~(1 \le i \le k)$. Since $M = \sum^k_{i =1}N_i$, we have $0 \le \delta (M) = \delta \left( \sum^k_{i =1}N_i \right) \le \sum^k_{i =1} \delta(N_i) =0$. To prove the last inequality, it is sufficient to show that $\delta(N_1 +  N_2) \le \delta(N_1) + \delta(N_2)$ for all finitely generated $A_n$-modules $N_1$ and $N_2$. Let $M' = N_1 + N_2$. Applying Lemma \ref{exact_dim}  to the canonical exact sequence $0 \rightarrow N_1 \rightarrow M' \rightarrow M'/N_1 \rightarrow 0$ we obtain that $\delta(M') = \delta(N_1) + \delta(M'/N_1) = \delta(N_1) + \delta((N_1 +N_2)/N_1) + \delta(N_1) + \delta(N_2/(N_1 \cap N_2)) \le \delta(N_1) + \delta(N_2)$. Thus, it has been shown that every element of $M$ is linearly independent over $A_n$ if and only if $\delta(M) = 0$.

Now, to complete the proof of the proposition, let $p$ be the maximal number of elements of the $A_n$-module $M$ that are linearly independent over $A_n$. Let $\{z_1, \ldots, z_p\}$ be any system of elements of $M$ that are linearly independent over $A_n$, and let $F = \sum^p_{i=1} A_n z_i$. Then $\{ \sum^p_{i=1} A_{n_r} z_i \}_{r \in \nn}$ is a good filtration of $F$. By Proposition \ref{kolchin_dim} (ii), the Bernstein polynomial associated with this filtration has the form $\chi (t) = p \binom{t+2n}{2n}$, and so $\delta(F) = \Delta^{2n} \chi(t) = p$. Furthermore, the maximality of the system $\{z_1, \ldots, z_p\}$, linearly independent over $A_n$, implies that every element of the finitely generated module $M/F$ is lineraly dependent over $A_n$, and hence $\delta(M/F) = 0$. Applying Lemma  \ref{exact_dim} to the exact sequence of finitely generated $A_n$-modules  
\begin{align*}
    0 \rightarrow F \rightarrow M \rightarrow M/F \rightarrow 0,
\end{align*}
we obtain that $\delta(M) = \delta(F) + \delta(M/F) = \delta(F) =p$.
\end{proof}

\begin{theorem}
\label{Krull-type}
Let $M$ be a finitely generated $A_n$-module, and let $U$ be the family of all $A_n$-submodules of $M$.  Then we have the following:
\begin{enumerate}[{\em (i)}]
    \item If $\delta(M) > 0$, then $type_U M = 2n$ 
    and 
    $dim_U M = \delta(M)$;
    \item If $\delta(M) = 0$, then $type_U M < 2n$ .
\end{enumerate}
\end{theorem}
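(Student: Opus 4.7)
The plan is to reduce Theorem~\ref{Krull-type} to a single quotient-level statement, which I will call the \emph{Key Lemma}: for every pair $(N,N') \in \mathscr{B}_U$, $\mu_U(N,N') \le 2n$, with equality if and only if $\delta(N/N') > 0$.

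Granting the Key Lemma, both parts follow by short arguments. For (ii), if $\delta(M) = 0$ then iterated use of Lemma~\ref{exact_dim} on $0 \to N \to M \to M/N \to 0$ and $0 \to N' \to N \to N/N' \to 0$ gives $\delta(N/N') \le \delta(M) = 0$ for every pair $(N,N') \in \mathscr{B}_U$, whence $\mu_U(N,N') < 2n$ throughout and $type_U M < 2n$. For (i), the pair $(M,0)$ has $\delta(M) > 0$, so by the Key Lemma and the uniform bound $type_U M = 2n$. For the identity $dim_U M = \delta(M)$: any chain $N_0 \supsetneq N_1 \supsetneq \cdots \supsetneq N_p$ with each $\mu_U(N_{i-1},N_i) = 2n$ has $\delta(N_{i-1}/N_i) \ge 1$ by the Key Lemma, and telescoping Lemma~\ref{exact_dim} gives $p \le \delta(N_0) \le \delta(M)$. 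Conversely, a maximal $A_n$-linearly independent set $\{z_1,\dots,z_{\delta(M)}\}$ in $M$ (Proposition~\ref{min_indep}) and the chain $N_i = \sum_{j>i} A_n z_j$ yields consecutive quotients isomorphic to $A_n$ (so $\delta = 1$), producing a chain of length $\delta(M)$ with every step having $\mu_U = 2n$.

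The Key Lemma is where the content lies. The bound $\mu_U \le 2n$ together with the forward implication $\mu_U = 2n \Rightarrow \delta(N/N') > 0$ I would obtain by showing that $\delta(N/N') = 0$ forces $\mu_U(N,N') < 2n$, using that the additivity of $\delta$ on exact sequences prevents an infinite descending chain inside $N/N'$ from maintaining full Bernstein dimension at every level of the fractal construction required by $\mu_U \ge 2n$. The converse---$\delta(N/N') > 0 \Rightarrow \mu_U(N,N') \ge 2n$---is the genuine obstacle. By Proposition~\ref{min_indep}, $N/N'$ admits a submodule isomorphic to $A_n$, reducing the question to exhibiting $\mu_U(A_n,0) \ge 2n$ inside $A_n$ itself. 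The natural candidate is the ``fractal'' chain $A_n \supsetneq A_n x_1 \supsetneq A_n x_1^2 \supsetneq \cdots$, whose consecutive quotients are all isomorphic to $A_n / A_n x_1$---an $A_n$-module of Bernstein dimension $2n-1$ that itself should carry enough submodule structure to produce inner chains of depth $2n-1$ by an induction on $n$.

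The main obstacle is implementing this recursive construction rigorously: verifying that the Bernstein dimension of the intermediate quotients drops by exactly one at each outer step, that these quotients admit the requisite deep inner chains, and organising the induction so that the Key Lemma for $A_n$ bootstraps from the analogous statement for modules of strictly smaller Bernstein dimension (or for $A_{n-1}$-modules via a suitable identification). Everything else---the reductions, additivity arguments, and the construction from independent generators---is essentially formal once the Key Lemma is in hand.
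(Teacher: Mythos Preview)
Your overall architecture---reducing everything to a quotient-level Key Lemma, then deriving (i) and (ii) by additivity and a chain built from a maximal $A_n$-independent set---is essentially the paper's, and the constructive half ($\delta(N/N')>0 \Rightarrow \mu_U(N,N')\ge 2n$ via the filtration $A_n \supsetneq A_n x_1 \supsetneq A_n x_1^2 \supsetneq \cdots$ and induction on $n$) is exactly what the paper does.

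The gap is in the other half of the Key Lemma. You propose to get both the universal bound $\mu_U\le 2n$ and the implication $\delta(N/N')=0\Rightarrow\mu_U(N,N')<2n$ from ``additivity of $\delta$ on exact sequences.'' But additivity of $\delta$ alone is too coarse: if $\delta(N/N')=0$ and you unfold $\mu_U(N,N')\ge 2n$ into an infinite chain with $\mu_U(N_{i-1},N_i)\ge 2n-1$, every subquotient again has $\delta=0$, and no contradiction arises---all the $\delta$'s are zero and stay zero, so there is nothing to telescope. The invariant $\delta$ only detects whether the Bernstein dimension equals $2n$; it cannot see what happens at level $2n-1$ or below, which is precisely what an argument about $\mu_U\ge 2n-1$ would require.

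What the paper actually proves is the finer inductive statement: for every $d\ge -1$, $\mu_U(N,L)\ge d$ implies $\deg\bigl(\chi_N(t)-\chi_L(t)\bigr)\ge d$. The induction step uses the canonical integer expansion of the numerical polynomial $\chi_{N_{i-1}}-\chi_{N_i}=\sum_{k=0}^{d-1} c_{i,k}\binom{t+k}{k}$ with $c_{i,d-1}\ge 1$; telescoping forces the partial sums $\sum_{\nu\le i} c_{\nu,d-1}$ to be a strictly increasing sequence of integers bounded above by the corresponding coefficient of $\chi_N-\chi_L$, which is impossible. This degree bound immediately gives both $\mu_U\le 2n$ (since $\deg(\chi_N-\chi_L)\le 2n$ always) and part (ii) (since $\delta(M)=0$ forces $\deg(\chi_N-\chi_L)<2n$ for every pair). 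Your plan needs exactly this refinement: track the full degree of the Bernstein polynomial of the quotient, not just whether it hits $2n$.
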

\begin{proof}
Let $\{M_r\}_{r \in \nn}$ be a finite system of generators of the $A_n$-module $M$, and let
$M_r = \sum^m_{i =1} B_r f_i$ be the corresponding filtration of $M$ with respect to the Bernstein filtration $\{B_r\}_{r \in \nn}$, where $B_r = \{D \in A_n : deg(D) \le r \}$. Let
$N,L \in U$, with $N \supseteq L$. 
By Proposition \ref{indu_fil}, $\{N \cap M_r\}_{r\in \nn}$ and $\{L \cap M_r\}_{r \in \nn}$ are good filtrations of the $A_n$-submodules $N$ and $L$, respectively. 
Thus, we can consider the corresponding Bernstein polynomials
$\chi_N(t)$ and $\chi_L(t)$ associated with these filtrations. These polynomials describe the dimension of the vector space $\{N \cap M_r\}_{r \in \nn}$ over the field $K$, and the dimension of the vector space $\{L \cap M_r\}_{r \in \nn}$ over the field $K$, respectively. The inclusion $N \supseteq L$, implies 
$N \cap M_r \supseteq L \cap M_r$ for any $r \in \nn$, and
thus 
$\chi_L(t) \le \chi_N(t)$ (where $ \le$ denotes the natural order on the set of numerical polynomials in one indeterminate $t$: $f(t) \le g(t)$ if and only if $f(r) \le g(r)$ for all sufficiently large $r \in \nn$).
Furthermore, $N = L$ if and only if $\chi_N(t) = \chi_L(t)$ (indeed, if $\chi_N(t) = \chi_L(t)$ but $N \subsetneq L$, then there exists $r_0 \in \nn$ such that $(N \cap M_r) \subsetneq (L \cap M_r)$ for all $r > r_0$, in which case $\chi_N(t) < \chi_L(t)$, contradicting our assumption). 

Next, we will prove that if  $(N,L) \in \mathscr{B}_U = \{(P,Q) \in U \times U : P \supseteq Q \}$ and
$\mu_U (N,L) \ge d$ ($d \in \zz, d \ge -1$), then $deg(\chi_N(t) - \chi_L(t)) \ge d$.
We proceed by induction on $d$. Since $deg(\chi_N(t) - \chi_L(t)) \ge -1$ for any pair $(P,Q) \in \mathscr{B}_U$, and $deg(\chi_N(t) - \chi_L(t)) \ge 0$ if $L \supsetneq N$ (as we noted above, in this case $\chi_L(t) < \chi_N(t)$), then the statement is true for $d=-1$ and $d=0$.

Now, let $d > 0$, and for any $i \in \nn$, with $i <d$, suppose the inequality $\mu_U (N,L) \ge i $ $((N, L) \in \mathscr{B}_U$) implies that $deg(\chi_N(t) - \chi_L(t)) \ge i$. Assume further that $(N, L) \in \mathscr{B}_U$ and $\mu_U (N,L) \ge d$ holds. Then there exists an infinite strictly descending chain  
\begin{align*}
    N = N_0 \supsetneq N_1 \supsetneq \cdots \supsetneq L
\end{align*}
of $A_n$-submodules of $M$ such that $\mu_U (N_{i-1},N_i) \ge d-1$ for $i = 1, 2, \ldots$. If $deg(\chi_{N_{i-1}}(t) - \chi_{N_i}(t)) \ge d$ for some $i \ge 1$, then $deg(\chi_N(t) - \chi_L(t)) \ge d$ and the statement would be proven.

For the sake of contradiction, suppose that $deg(\chi_{N_{i-1}}(t) - \chi_{N_i}(t)) = d -1$ for all $i = 1, 2, \ldots$. Then every polynomial $\chi_{N_{i-1}}(t) - \chi_{N_i}(t)$ ($i \in \nn, i \ge 1$) can be written in the form
\begin{align*}
    \chi_{N_{i-1}}(t) - \chi_{N_i}(t)
=
    \sum^{d-1}_{k=0} c_{i,k} \binom{t+k}{k},
\end{align*}
where $c_{i,0}, \ldots, c_{i,d-1} \in \zz$ and $c_{i, d-1} >0$
(see Proposition \ref{canon}).
In this case, 
$\chi_N(t) - \chi_{N_i}(t) = \sum^{d-1}_{k=0} c'_{ik}\binom{t+k}{k}$, 
where $c'_{i,0}, \ldots, c'_{i,d-1} \in \zz$ and 
$c'_{1 ,d-1} = \sum^i_{\nu =1} c_{\nu ,d-1}$.
Therefore, $c'_{1, d-1} < c'_{2 ,d-1} < \cdots $. If 
$deg(\chi_N(t) - \chi_L(t)) = d -1$, then $\chi_N(t) - \chi_L(t) = \sum^{d-1}_{k=0} a_k \binom{t+k}{k}$. Since  
$\chi_{N_{i-1}}(r) - \chi_{N_i}(r) < \chi_N(r) - \chi_L(r)$ for all sufficiently large $r$, then $c'_{1, d-1} < c'_{2 ,d-1} < \cdots  < a_{d-1}$, a contradiction because $a_{d-1}$ is a fixed number and thus it is impossible to have a strictly increasing chain of natural numbers less than $a_{d-1}$. Therefore,
$deg(\chi_N(t) - \chi_L(t)) \ge d$ for any pair $(N, L) \in \mathscr{B}_U$ with $\mu_U (N,L) \ge d$.

This last statement, together with the fact that $deg(\chi_N(t) - \chi_L(t)) \le 2n$ for any $(N, L) \in \mathscr{B}_U$, implies that $\mu_U (N,L) \le 2n$ for all pairs $(N,L) \in \mathscr{B}_U$. It follows that
\begin{align}
\label{type}
    type_U M \le 2n.
\end{align}

Next we will prove that $type_U M \ge 2n$. If $\delta(M) = p > 0$, then Proposition \ref{min_indep} shows that $M$ contains $p$
elements $z_1, \ldots, z_p$ that are linearly independent over $A_n$. If $U'$ is the family of all $A_n$-submodules of the $A_n$-module $A_n z_1$, then $type_{U'} A_n z_1 \le type_U M$. Thus, to complete the proof of the first statement of the theorem, it is sufficient to consider an $A_n$-module $M =A_nz_1$, where $z_1$ is linearly independent over $A_n$, and show that $type_{U'} A_n z_1 \ge 2n$.
Let $U''$ denote the family of all $A_n$-submodules of $A_n z_1$ that can be represented as finite sums of $A_n$-modules of the form $A_n x^\alpha \p^\beta z_1$, where $\alpha$ and $\beta$
are multi-indices from $\nn^n$. We are going to use induction on $n$ to show that  $\mu_{U''} (A_n z_1, 0) \ge 2n$. If $n =0$, the inequality is obvious. So let $n >0$. Since the element $z_1$ is linearly independent over $A_n$, we have the following strictly descending chain
\begin{align*}
    A_n z_1 \supsetneq A_n x_n z_1 \supsetneq A_n x^2_n z_1 \supsetneq \cdots \supsetneq 0
\end{align*}
of elements of $U''$. Let us show that $\mu_{U''}(A_n x^{i-1}_n z_1, A_n x^i_n z_1) \ge 2n -1$ for all $i = 1, 2, \ldots$. Let $L = A_n x^{i-1}_n z_1 / A_n x^i_n z_1 $ and let $y$ be the image element $ x^{i-1}_n z_1 $ under the natural epimorphism $ A_n x^{i-1}_n z_1 \rightarrow L$.
Then $x_n y =0$ and $x_n \theta' y = \theta' x_n y =0$ for any element $\theta' \in \Theta'$, where 
\begin{align*}
    \Theta' 
    =
    \{ \prod^n_{i=1} \prod^n_{j=1} x^{\alpha_i}_i \p^{\beta_j}_j 
    : \alpha_1, \ldots, \alpha_n, \p_1, \ldots, \p_n \in \nn \}.
\end{align*}

Let $A'_n = A_{n-1}\p_n$, where $A_{n-1}$ is the Weyl algebra generated by $x_1, \ldots, x_{n-1}$ and $\p_1, \ldots, \p_{n-1}$.
Then $L = A_n' y$, and the element $y$ is linearly independent over $A_n'$.

Furthermore, $\mu_{U''} (A_n x^{i-1}_n z_1, A_n x^i_n z_1) = \mu_{U''_1} (L,0)$ where $U''_1$ is the family of all $A_n'$-submodules of $L$ that can be represented as finite sums of the $A_n'$-modules of the form $A_n'\theta' y~(\theta' \in \Theta)$.
The chain $L \supseteq A_{n-1} \p_n y \supseteq A_{n-1} \p^2_n y \supseteq \cdots \supseteq \{0\}$
shows that if $U^*$ is the set of all $A_{n-1}$-submodules of $C_i = A_n \p^i_n y / A_{n-1} \p^{i+1}_n y~ (i \in \nn)$, then 
$\mu_U (A_n z_1, 0) \ge \mu_{U^*} (C_i, 0)+2 \ge 2(n-1)$. 
By the induction hypothesis,
$\mu_{U''_1} (L, 0) \ge 2n-1$, and so $\mu_{U''} (A_n x^{i-1}_n z_1, A_n x^i_n z_1) \ge 2n -1~ (i= 1, 2, \ldots )$. Thus $\mu_{U''} (A_n z_1, 0) \ge 2n$.

Hence, 
\begin{align*}
    type_U M \ge type_{U'} A_n z_1 \ge type_{U''} A_n z_1 \ge 2n.
\end{align*}
Combining these inequalities with (\ref{type}), we obtain
\begin{align*}
    type_U M = 2n.
\end{align*}

The proof of this last equality also shows that $type_{U'_i} A_n z_i = 2n$  for every  $i= 1, 2, \ldots$, where $U'_i$ denotes the family o all $A_n$-submodules of the $A_n$-module $A_n z_i$. Therefore, \newline
$\mu_U \left( \sum^k_{i=1} A_n z_i, \sum^{k-1}_{i=1} A_n z_i\right) = 2n = type_U M$ for every $k= 1, \ldots, q $, and the chain
\begin{align*}
    M =
    \sum^q_{i=1} A_n z_i \supsetneq
    \sum^{q-1}_{i=1} A_n z_i  \supsetneq
    \cdots
    \supsetneq
    A_n z_1  \supsetneq
    0
\end{align*}
leads to the inequality
\begin{align}
\label{delta_in}
    dim_U M \ge \delta(M).
\end{align}

Let $N_0 \supsetneq N_1 \supsetneq \cdots \supseteq N_p$ be a chain of $A_n$-submodules of $M$ such that $\mu_U (N_{i-1}, N_i) = type_U M =2n$, for $i= 1, \ldots, p$. The arguments used at the beginning of the proof show that the degree of each polynomial $\chi_{N_{i-1}} (t) - \chi_{N_i} (t)~ (1 \le i \le p)$ is equal to $2n$, and so every such polynomial can be written as
\begin{align*}
    \chi_{N_{i-1}} (t) - \chi_{N_i} (t)
    =
     \sum^{2n}_{k=0} a_{ik} \binom{t+k}{k},
\end{align*}
where $a_{ik} \in \zz~ (0 \le k \le2 n)$. It follows that
\begin{align*}
    \chi_{N_0} (t) - \chi_{N_p} (t)
    &= 
    \sum^p_{i=1} \big( \chi_{N_{i-1}} (t) - \chi_{N_i} (t) \big)   \\
    &=  
      \sum^{2n}_{k=0} a'_k \binom{t+k}{k},
\end{align*}
where $a'_i \in \zz~ (0 \le k \le 2n)$.
\medskip

On the other hand,
\begin{align*}
    \chi_{N_0} (t) - \chi_{N_p} (t)
    \le
    \chi_{N_0} (t) 
    \le 
    \chi_M (t) 
    =
    \sum^{2n}_{k=0} a_k \binom{t+k}{k},
\end{align*}
where $a_{2n} = \delta (M)$ (we write $f(t) \le g(t)$ for two numerical polynomials $f(t)$ and $g(t)$ if $f(r) \le g(r)$ for all sufficiently large $r \in \nn$). It follows that $a_{2n} \ge a'_{2n} \ge p$, and hence $\delta(M) \ge dim_U M$. Combining the last inequality with (\ref{delta_in}) we obtain the desired inequality $dim_U M = \delta(M)$.

Finally, we prove Part (ii) of the theorem. If $\delta (M) =0$, then for any pair $(N, L) \in \mathscr{B}_U$, we have the equality $\delta (N) = \delta (L) = 0$, and hence $deg(\chi_N (t)) < 2n, deg(\chi_L (t)) <2n$, and $deg\big( \chi_N (t) - \chi_L (t) \big) < 2n$. As it has been shown above, the last inequality implies the inequality $\mu_U (N, L) < 2n$. Therefore, if $\delta (M) =0$, then $type_U M  < 2n$.
\end{proof}

%%%%%%%%%%%%%%%%%%%%%%%%%%%%%%%%%%%%%%%%%%
%Acknowledgments
%%%%%%%%%%%%%%%
\section*{Acknowledgments}

The author would like to thank Dr. Alexander Levin who supervised the master’s thesis research \cite{priet} that contained an early version of these results.

%%%%%%%%%%%%%%%%%%%%%%%%%%%%%%%%%%%%%%%%%%
% B
%%%%%%%%%%%%%%%
%

%%%%%%%%%%%%%%%
%
%
%   END of REFS
%
%
%%%%%%%%%%%%%%%

%

\vskip0.2in
\begin{minipage}[b]{9 cm}
    %Department of Mathematics \\ 
     %The Catholic University of America\\
%Washington, D.C., 20064\\
      Email: {\it 22prieto@cua.edu}
\end{minipage}

\end{document}